    \newcommand{\Q}{\mathbb{Q}}
    \newcommand{\R}{\mathbb{R}}
    \newcommand{\ran}{\mbox{\rm ran}}
    \newcommand{\Cwf}{\mathcal{C}}
    \newcommand{\Iwf}{\mathcal{I}}
    \newcommand{\Kwf}{\mathcal{K}}
    \newcommand{\Mwf}{\mathcal{M}}
    \newcommand{\Nwf}{\mathcal{N}}
    \newcommand{\bfrak}{\mathfrak{b}}
    \newcommand{\cfrak}{\mathfrak{c}}
    \newcommand{\dfrak}{\mathfrak{d}}
    \newcommand{\rfrak}{\mathfrak{r}}
    \newcommand{\sfrak}{\mathfrak{s}}
    \newcommand{\xfrak}{\mathfrak{x}}
    \newcommand{\yfrak}{\mathfrak{y}}
    \newcommand{\add}{\mbox{\rm add}}
    \newcommand{\cov}{\mbox{\rm cov}}
    \newcommand{\non}{\mbox{\rm non}}
    \newcommand{\cof}{\mbox{\rm cof}}
    \newcommand{\Ed}{\mathbf{Ed}}
    \newcommand{\Mbf}{\mathbf{M}}
    \newcommand{\Vbf}{\mathbf{V}}
    \newcommand{\Nbf}{\mathbf{N}}
    \newcommand{\Cn}{\mathbf{Cn}}
    \newcommand{\Bor}{\mathbb{B}}
    \newcommand{\Cor}{\mathbb{C}}
    \newcommand{\Por}{\mathbb{P}}
    \newcommand{\Ior}{\mathbb{I}}
    \newcommand{\Sbf}{\mathbf{S}}
    \newcommand{\Cf}{\mathbf{Cf}}
\title{Tukey-order with models on Palikowski's theorems}
\author{Miguel A. Cardona}
\date{Institute of Discrete Mathematics and Geometry.\\
Faculty of Mathematics and Geoinformation.\\
\addvspace{\medskipamount}
TU Wien.}
\begin{document}

\newcounter{enuAlph}
\renewcommand{\theenuAlph}{\Alph{enuAlph}}

\makeatletter
\def\@roman#1{\romannumeral #1}
\makeatother

\theoremstyle{plain}
  \newtheorem{theorem}{Theorem}[section]
  \newtheorem{corollary}[theorem]{Corollary}
  \newtheorem{lemma}[theorem]{Lemma}
  \newtheorem{mainlemma}[theorem]{Main Lemma}
  \newtheorem{maintheorem}[enuAlph]{Main Theorem}
  \newtheorem{prop}[theorem]{Proposition}
  \newtheorem{claim}[theorem]{Claim}
  \newtheorem{exer}[theorem]{Exercise}
  \newtheorem{question}[theorem]{Question}
   \newtheorem{fact}[theorem]{Fact}
  \newtheorem{problem}[theorem]{Problem}
  \newtheorem{conjecture}[theorem]{Conjecture}
  \newtheorem*{thm}{Theorem}
  \newtheorem{teorema}[enuAlph]{Theorem}
  \newtheorem*{defn*}{Definition}
  \newtheorem*{corolario}{Corollary}
\theoremstyle{definition}
  \newtheorem{definition}[theorem]{Definition}
  \newtheorem{example}[theorem]{Example}
  \newtheorem{remark}[theorem]{Remark}
  \newtheorem{notation}[theorem]{Notation}
  \newtheorem{context}[theorem]{Context}

\newcommand{\azul}[1]{{\color{blue}#1}}
\newcommand{\rojo}[1]{{\color{red}#1}}
\newcommand{\verdeone}[1]{{\color{green}#1}}
\newcommand{\tachar}[1]{{\color{red}\sout{#1}}}
\definecolor{amber}{rgb}{1.0,0.49,0.0}
\definecolor{huntergreen}{rgb}{0.21, 0.37, 0.23}
\definecolor{ogreen}{RGB}{107,142,35}
\definecolor{bostonuniversityred}{rgb}{0.8, 0.0, 0.0}

\newcommand{\ogreen}[1]{{\color{ogreen}#1}}
\newcommand{\amber}[1]{{\color{amber}#1}}
\newcommand{\huntergreen}[1]{{\color{huntergreen}#1}}
\newcommand{\bostonuniversityred}[1]{{\color{bostonuniversityred}#1}}

\newcommand{\Fn}{\mathrm{Fn}}
\newcommand{\leqT}{\preceq_{\mathrm{T}}}
\newcommand{\eqT}{\cong_{\mathrm{T}}}
\newcommand{\la}{\langle}
\newcommand{\ra}{\rangle}
\newcommand{\id}{\mathrm{id}}
\newcommand{\Lv}{\mathrm{Lv}}
\newcommand{\sig}{\boldsymbol{\Sigma}}
\newcommand{\spl}{\mathrm{spl}}
\newcommand{\st}{\mathrm{st}}
\newcommand{\suc}{\mathrm{succ}}
\newcommand{\cosig}{\boldsymbol{\Pi}}
\newcommand{\Lb}{\mathrm{Lb}}
\newcommand{\pw}{\mathrm{pw}}
\newcommand{\Lc}{\mathbf{Lc}}

\newcommand{\blc}{\mathfrak{b}^{\mathrm{Lc}}}
\newcommand{\dlc}{\mathfrak{d}^{\mathrm{Lc}}}
\newcommand{\Cv}{\mathbf{Cv}^\exists}
\newcommand{\aLc}{\mathbf{aLc}}
\newcommand{\balc}{\mathfrak{b}^{\mathrm{aLc}}}
\newcommand{\dalc}{\mathfrak{d}^{\mathrm{aLc}}}

\newcommand{\Hcal}{\mathcal{H}}
\newcommand{\SNcal}{\mathcal{SN}}
\newcommand{\Fr}{\mathrm{Fr}}
\newcommand{\Dbf}{\mathbf{D}}
\newcommand{\Cbf}{\mathbf{C}}
\newcommand{\Rbf}{\mathbf{R}}
\newcommand{\Ibb}{\mathbb{I}}
\newcommand{\PTbb}{\mathbb{PT}}
\newcommand{\Qbb}{\mathbb{Q}}
\newcommand{\Tbb}{\mathbb{T}}
\newcommand{\Scal}{\mathcal{S}}
\newcommand{\Ctebf}{\mathbf{Cte}}
\newcommand{\sigmaf}{\sigma^f}
\newcommand{\Af}{A^f}

\maketitle

\begin{abstract}
In \cite{paw86} Pawlikowski proved that, if $r$ is a random real over $\Nbf$, and $c$ is Cohen real over $\Nbf[r]$, then 
\begin{itemize}
    \item[(a)] in $\Nbf[r][c]$ there is a Cohen real over $\Nbf[c]$, and 
    \item[(b)] $2^\omega\cap\Nbf[c]\notin\Nwf\cap\Nbf[r][c]$, so in $\Nbf[r][c]$ there is no random real over $\Nbf[c]$. 
\end{itemize}
To prove this, Pawlikowski proposes the following notion: Given two models $\Nbf\subseteq \Mbf$ of ZFC,  we associate with a cardinal characteristic $\xfrak$ of the continuum, a sentence $\xfrak_\Nbf^\Mbf$ saying that, in $\Mbf$, the reals in $\Nbf$ give an example of a family fulfilling the requirements of the cardinal. So to prove (a) and (b), it suffices to prove that 
\begin{itemize}
    \item[(a')] $\cov(\Mwf)_{\Nbf[c]}^{\Mbf[c]}\Rightarrow\cof(\Mwf)_{\Nbf}^{\Mbf}\Rightarrow\cov(\Nwf)_{\Nbf}^{\Mbf}$, and 
    \item[(b')] $\cov(\Mwf)_\Nbf^\Mbf\Rightarrow\add(\Mwf)_{\Nbf}^{\Mbf}\Rightarrow\non(\Mwf)_{\Nbf[c]}^{\Mbf[c]}\Rightarrow\cov(\Nwf)_{\Nbf[c]}^{\Mbf[c]}$.
\end{itemize}
In this paper we introduce the notion of Tukey-order with models, which expands the concept of Tukey-order introduced by Vojt\'{a}\v{s} \cite{V}, to prove expressions of the form $\xfrak_\Nbf^\Mbf\Rightarrow\yfrak_\Nbf^\Mbf$. In particular, we show (a') and (b') using Tukey-order with models.

\end{abstract}

\section{Introduction}\label{SecIntro}

  
Let $\Nwf$ be the $\sigma$-ideal of measure zero subsets of $2^\omega$, $\Mwf$ the $\sigma$-ideal of meager sets in $2^\omega$, let $\Kwf$ be the $\sigma$-ideal generated by the subsets of $\R$ whose intersection with $\Q^*$ (the set of irrational numbers) is compact in $\Q^*$, and let  $\Cwf$ be the $\sigma$-ideal of countable subsets of reals. It is well-known that $\add(\Kwf)=\non(\Kwf)=\bfrak$, $\add(\Cwf)=\non(\Cwf)=\aleph_1$, $\cov(\Kwf)=\cof(\Kwf)=\dfrak$, and $\cov(\Cwf)=\cof(\Cwf)=\cfrak$, where $\bfrak$, $\dfrak$ and $\cfrak$ are the bounding and dominating numbers, and the size of $\R$, respectively. These cardinals describe the entries in Cicho\'n's diagram.

\begin{figure}[ht]
  \begin{center}
   \includegraphics[scale=1.0]{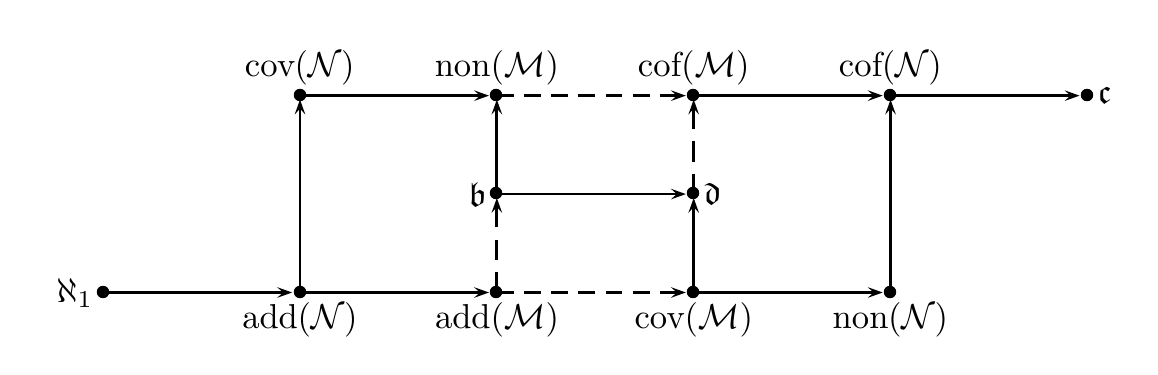}
   \caption{Cicho\'n's diagram.
   An arrow indicates that ZFC proves $\leq$ between both cardinals. In addition,
    in the diagram, the dashed arrows mean
   $\add(\Mwf)=\min\{\bfrak,\cov(\Mwf)\}$ and $\cof(\Mwf)=\max\{\dfrak,\non(\Nwf)\}$.}
     \label{Figcichon}
  \end{center}
\end{figure}

A \textit{relational system} is a triple $\Rbf=\la X, Y, \sqsubset\ra$ where $\sqsubset$ is a relation contained in $X\times Y$. Such a relational system has two cardinal invariants associated with it:

\begin{align*}
 \bfrak(\Rbf)&:=\min\{|F|:F\subseteq X\text{\ and }\neg\exists y\in Y\forall x\in F(x \sqsubset y)\},\\
 \dfrak(\Rbf)&:=\min\{|D|:D\subseteq Y\text{\ and }\forall x\in X \exists y\in D(x \sqsubset y)\}.
\end{align*}

Denote $\Rbf^{\perp}:=\la Y, X,\sqsubset^{\perp}\ra$ where $y \sqsubset^{\perp}x$ iff $\neg(x \sqsubset y)$.  

We say that a relational system $\Rbf=\la X, Y, \sqsubset\ra$ is $\textit{real}$-\textit{definable} if both $X$ and $Y$ are non-empty and analytic in Polish spaces  $Z$ and $W$, respectively, and  $\sqsubset$ is analytic in $Z\times W$.
\begin{definition}[Tukey-order {\cite{V}}]\label{def:tukey}
Let $\Rbf':=\la X',Y', \sqsubset'\ra$ be another relational system. If $\Psi_-$, $\Psi_+$ are a pair of mappings $\Psi_-:X\to X'$, and $\Psi_+:Y'\to Y$ such that, for any $x\in X$ and $y'\in Y'$, $\Psi_-(x) \sqsubset'y'$ implies $x \sqsubset \Psi_+(y')$, then
we say that \emph{$\Rbf$ is Tukey-below $\Rbf'$}, denoted by $\Rbf\leqT\Rbf'$. Say that \emph{$\Rbf$ and $\Rbf'$ are Tukey-equivalent}, denoted by $\Rbf\eqT\Rbf'$, if $\Rbf\leqT\Rbf'$ and $\Rbf'\leqT\Rbf$. Note that $\Rbf\leqT\Rbf'$ implies $\bfrak(\Rbf')\leq\bfrak(\Rbf)$ and $\dfrak(\Rbf)\leq\dfrak(\Rbf')$.
\end{definition}

Let $\Mbf$ be a transitive model of ZFC (or of a large enough finite fragment of it). For a real $x$ denote by $B_x$ the Borel set with code $x$ and set $B_x^\Mbf=B_x\cap\Mbf$ its relativization in $\Mbf$. For a real definable system $\Rbf=\la X,Y,\sqsubset\ra$, when dealing with $\Rbf$ inside some model $\Mbf$, we look at its interpretations $\Rbf^\Mbf=\la X^\Mbf,Y^\Mbf,\sqsubset^\Mbf\ra$. If $\Iwf$ is an ideal let $\Iwf^\Mbf$ be the family of members of $\Iwf$ whose members can be covered by a Borel set in $\Iwf$ coded in $\Mbf$. If $\Nbf\subseteq \Mbf$ are two models of ZFC we associate with each cardinal characteristic in Cicho\'n's diagram a sentence saying that $\Nbf$ gives an example in $\Mbf$ of a family fulfilling the requirements of the cardinal, that is, for $\Iwf\in\{\Nwf,\Mwf\}$ define the properties:
\begin{align*}
\dfrak_\Nbf^\Mbf &:\textrm{any function from $\omega^\omega\cap\Mbf$ is dominated by a function from $\omega^\omega\cap\Nbf$}.\\
\bfrak_\Nbf^\Mbf &:\textrm{there is no function from $\omega^\omega\cap\Mbf$ that dominates all functions from $\omega^\omega\cap\Nbf$}.\\
\add(\Iwf)_\Nbf^\Mbf&: \bigcup \Iwf^\Nbf\not\in\Iwf^\Mbf.\\
\cov(\Iwf)_\Nbf^\Mbf&:\, \bigcup \Iwf^\Nbf\supseteq2^\omega\cap\Mbf.\\
\non(\Iwf)_\Nbf^\Mbf&:\, 2^{\omega}\cap\Nbf\notin\Iwf^\Mbf.\\
\cof(\Iwf)_\Nbf^\Mbf&:\, \Iwf^\Nbf \textrm{\ is cofinal in\ } \Iwf^\Mbf.
\end{align*}
Cicho\'n's and Pawlikowski \cite{cicpal} investigated  the effect  on the cardinal characteristics in Cicho\'n's diagram after adding a single Cohen real or one random real. Motivated by this investigation, Palikowski \cite{paw86} formulated and proved that, if $c$  and $r$ are a Cohen real over $\Mbf$ and  a random real over $\Mbf$ respectively, then 
    \begin{itemize} 
    \item[(C1)] $\cof(\Mwf)_{\Nbf}^{\Mbf}\Leftrightarrow\cof(\Mwf)_{\Nbf[c]}^{\Mbf[c]}\Leftrightarrow\dfrak_{\Nbf[c]}^{\Mbf[c]}\Leftrightarrow\cov(\Mwf)_{\Nbf[c]}^{\Mbf[c]}$.
    \item[(C2)] $\add(\Mwf)_{\Nbf}^{\Mbf}\Leftrightarrow\add(\Mwf)_{\Nbf[c]}^{\Mbf[c]}\Leftrightarrow\bfrak_{\Nbf[c]}^{\Mbf[c]}\Leftrightarrow\non(\Mwf)_{\Nbf[c]}^{\Mbf[c]}$.
   \item[(C3)] $\add(\Nwf)_{\Nbf}^{\Mbf}\Rightarrow\add(\Nwf)_{\Nbf[c]}^{\Mbf[c]}\Leftrightarrow\cov(\Nwf)_{\Nbf[c]}^{\Mbf[c]}$.
   \item[(C4)] $\non(\Nwf)_{\Nbf[c]}^{\Mbf[c]}\Leftrightarrow\cof(\Nwf)_{\Nbf[c]}^{\Mbf[c]}\Rightarrow\cof(\Nwf)_{\Nbf}^{\Mbf}$.
\end{itemize}
\begin{itemize} 
    \item[(R1)] $\add(\Nwf)_{\Nbf}^{\Mbf}\Leftrightarrow\add(\Nwf)_{\Nbf[r]}^{\Mbf[r]}$ and $\cof(\Nwf)_{\Nbf}^{\Mbf}\Leftrightarrow\cof(\Nwf)_{\Nbf[r]}^{\Mbf[r]}$.
    \item[(R2)] $\bfrak_{\Nbf}^{\Mbf}\Leftrightarrow\bfrak_{\Nbf[r]}^{\Mbf[r]}$ and $\dfrak_{\Nbf}^{\Mbf}\Leftrightarrow\dfrak_{\Nbf[r]}^{\Mbf[r]}$.
   \item[(R3)] $\dfrak_{\Nbf}^{\Mbf}\Rightarrow\non(\Nwf)_{\Nbf[r]}^{\Mbf[r]}$ and $\cov(\Nwf)_{\Nbf[r]}^{\Mbf[r]}\Rightarrow\bfrak_{\Nbf}^{\Mbf}$. 
    \item[(R4)] $\cov(\Mwf)_{\Nbf[r]}^{\Mbf[r]}\Rightarrow\cov(\Mwf)_{\Nbf}^{\Mbf}$ and $\non(\Mwf)_{\Nbf}^{\Mbf}\Rightarrow\non(\Mwf)_{\Nbf[r]}^{\Mbf[r]}$.
    \item[(R5)] $\cov(\Nwf)_{\Nbf[r]}^{\Mbf[r]}\Rightarrow\cov(\Nwf)_{\Nbf}^{\Mbf}$ and $\non(\Nwf)_{\Nbf}^{\Mbf}\Rightarrow\non(\Nwf)_{\Nbf[r]}^{\Mbf[r]}$.
    \item[(R6)]  $\add(\Mwf)_{\Nbf[r]}^{\Mbf[r]}\Leftrightarrow\add(\Mwf)_{\Nbf}^{\Mbf}$ and $\cof(\Mwf)_{\Nbf}^{\Mbf}\Leftrightarrow\cof(\Mwf)_{\Mbf[r]}^{\Mbf[r]}$.
\end{itemize}
Later, Bartoszy\'nki, Roslanowski, and Shelah \cite{brs} proved the converse of (R4):
\begin{itemize} 
\item[(R7)] $\cov(\Mwf)_{\Nbf}^{\Mbf}\Rightarrow\cov(\Mwf)_{\Nbf[r]}^{\Mbf[r]}$ and $\non(\Mwf)_{\Nbf[r]}^{\Mbf[r]}\Rightarrow\non(\Mwf)_{\Nbf}^{\Mbf}$.
\end{itemize}
This research was completed by Shelah \cite[Lemma 1.3.4]{BJ} who proved that  
\begin{itemize}
    \item[(R8)] $\rfrak_{\Nbf[r]}^{\Mbf[r]}\Rightarrow\dfrak_\Nbf^\Mbf$ and $\bfrak_\Nbf^\Mbf\Rightarrow\sfrak_{\Nbf[r]}^{\Mbf[r]}$ (where $\rfrak$ and $\sfrak$ are \textit{the splitting number} and \textit{unreaping number}, respectively, see \autoref{examplebasic}(v)).
\end{itemize}
We summarize these implications in \autoref{cichoncohen} and \autoref{cichonrandom}.

The proof of the implication $\dfrak_{\Nbf[c]}^{\Mbf[c]}\Rightarrow\dfrak_{\Nbf}^{\Mbf}$ (in \cite[Lemma 3.4]{cicpal}) is reviewed as follows: For each $f\in\Nbf[c]\cap\omega^\omega$ find  $g_f\in\Nbf\cap\omega^\omega$ in such way that each $h\in\Mbf\cap\omega^\omega$ dominated by $f$ is also dominated by $g_f$. In other words, if $h\in\omega^\omega\cap\Mbf$ is not dominated by any function from $\omega^\omega\cap\Nbf$ then it is also not dominated by any function from $\Nbf[c]\cap\omega^\omega$. 

A curious aspect of the argument above, established by Cicho\'n and Palikowski, gives additional information beyond of the implications. They get (implicitly) two maps 
$\Psi_-:\omega^\omega\cap \Mbf\to\omega^\omega\cap \Mbf[c]$ and $\Psi_+:\omega^\omega\cap \Nbf[c]\to \Nbf\cap\omega^\omega$ such that, for any $h\in\omega^\omega\cap\Mbf$ and $f\in \Nbf[c]\cap\omega^\omega$, $\Psi_-(h)=h$ and if $h$ is dominated by $f$ then it is also dominated by $\Psi_+(f)$, which resembles the Tukey-order. This can be rephrased in the language of Tukey-order (see \autoref{def:tukey}), that is,  $\la\omega^\omega\cap\Mbf,\omega^\omega\cap\Nbf,\leq^*\ra\leqT\la\omega^\omega\cap\Mbf[c],\omega^\omega\cap\Nbf[c],\leq^*\ra$. This rephrasing is important because we obtain in this way a simple description to treat implications between  sentences involving cardinal characteristics.

Motivated by the above description we expland the concept of Tukey-order with models as follows:

Assume that $\Nbf\subseteq \Mbf$ are models of ZFC. For a  real definable relational system $\Rbf$ we let
\begin{itemize}
\item[(i)] $\dfrak(\Rbf)_{\Nbf}^{\Mbf}$ iff $\forall x\in X^\Mbf\exists y\in Y^\Nbf(x \sqsubset y)$. 
\item[(ii)] $\bfrak(\Rbf)_{\Nbf}^{\Mbf}$ iff $\neg\exists y\in Y^\Mbf\forall x\in X^\Nbf(x \sqsubset y)$.
\end{itemize}

\begin{definition}\label{modeltukey}
Let $\Nbf_0\subseteq \Mbf_0$, $\Nbf\subseteq \Mbf$ be models of ZFC and let $\Rbf, \Rbf'$ be two real definable systems. We write $\Rbf\preceq_{\Nbf_0,\Nbf}^{\Mbf_0,\Mbf}\Rbf'$ if there is a pair of maps $\Psi_-:X^{\Mbf_0}\to X'^\Mbf$ and $\Psi_+:Y'^\Mbf\to Y^{\Mbf_0}$ such that 
\begin{itemize}
    \item[(a)] for all $x\in X^{\Mbf_0}$ and for all $y'\in Y'^\Mbf$,
    $\Psi_-(x)\sqsubset'y'$ implies $x \sqsubset\Psi_+(y')$.
    \item[(b)] $\Psi_-[X^{\Nbf_0}]\subseteq X'^\Nbf$ and $\Psi_+[Y'^{\Nbf}]\subseteq Y^{\Nbf_0}$.
\end{itemize}
Also we write $\Rbf\cong_{\Nbf_0,\Nbf}^{\Mbf_0,\Mbf}\Rbf'$ if $\Rbf\preceq_{\Nbf_0,\Nbf}^{\Mbf_0,\Mbf}\Rbf'$ and $\Rbf'\preceq_{\Nbf,\Nbf_0}^{\Mbf,\Mbf_0}\Rbf$.
\end{definition}

So this definition formalices the above example. The reason for considering the Tukey-order with models is the following lemma: 

\begin{lemma}\label{lem:tukey}
Assume that $\Nbf_0\subseteq \Mbf_0$, $\Nbf\subseteq \Mbf$ are models of ZFC and let $\Rbf, \Rbf'$ be two real definable relational systems. If $\Rbf\preceq_{\Nbf_0,\Nbf}^{\Mbf_0,\Mbf}\Rbf'$, then   $\dfrak(\Rbf')_{\Nbf}^{\Mbf}\Rightarrow\dfrak(\Rbf)_{\Nbf_0}^{\Mbf_{0}}$ and $\bfrak(\Rbf)_{\Nbf_{0}}^{\Mbf_{0}}\Rightarrow\bfrak(\Rbf')_{\Nbf}^{\Mbf}$. 
\end{lemma}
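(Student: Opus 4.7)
The plan is to unfold the definitions of $\dfrak(\Rbf)^{\Mbf_0}_{\Nbf_0}$, $\bfrak(\Rbf')^{\Mbf}_{\Nbf}$, and the Tukey reduction $\Psi_-,\Psi_+$, and then simply chase an element through the maps. The argument should be purely formal, so the main thing to watch is that the typing matches at every step (domain/codomain of $\Psi_\pm$, and which model an object lives in).

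For the first implication, assume $\dfrak(\Rbf')^{\Mbf}_{\Nbf}$ holds and pick an arbitrary $x\in X^{\Mbf_0}$. First I would apply $\Psi_-$ to obtain $\Psi_-(x)\in X'^{\Mbf}$, then invoke $\dfrak(\Rbf')^{\Mbf}_{\Nbf}$ to produce $y'\in Y'^{\Nbf}$ with $\Psi_-(x)\sqsubset' y'$. By clause (a) of \autoref{modeltukey} this gives $x\sqsubset\Psi_+(y')$, and by clause (b) we have $\Psi_+(y')\in Y^{\Nbf_0}$; hence $y:=\Psi_+(y')$ is the required witness in $\Nbf_0$, establishing $\dfrak(\Rbf)^{\Mbf_0}_{\Nbf_0}$.

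For the second implication I would argue by contrapositive: assume $\neg\bfrak(\Rbf')^{\Mbf}_{\Nbf}$, so there is some $y'\in Y'^{\Mbf}$ with $x'\sqsubset' y'$ for every $x'\in X'^{\Nbf}$. Set $y:=\Psi_+(y')\in Y^{\Mbf_0}$, which is well-typed because $\Psi_+:Y'^{\Mbf}\to Y^{\Mbf_0}$. For an arbitrary $x\in X^{\Nbf_0}$, clause (b) gives $\Psi_-(x)\in X'^{\Nbf}$, so $\Psi_-(x)\sqsubset' y'$ by assumption, and then clause (a) yields $x\sqsubset\Psi_+(y')=y$. This produces a unique $\Mbf_0$-witness bounding all of $X^{\Nbf_0}$, i.e.\ $\neg\bfrak(\Rbf)^{\Mbf_0}_{\Nbf_0}$.

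I do not expect any obstacle: the proof is a direct diagram chase mirroring the classical verification that $\Rbf\leqT\Rbf'$ implies $\bfrak(\Rbf')\leq\bfrak(\Rbf)$ and $\dfrak(\Rbf)\leq\dfrak(\Rbf')$ from \autoref{def:tukey}. The only subtlety is bookkeeping: one must use (a) to transport the relation across the Tukey morphism and (b) to guarantee that the witnesses one constructs indeed lie in the prescribed inner models $\Nbf_0$ and $\Nbf$ (rather than only in the outer models $\Mbf_0$ and $\Mbf$).
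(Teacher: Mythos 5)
Your proof is correct and follows essentially the same diagram chase as the paper: the first implication is verbatim the paper's argument, and your contrapositive treatment of the $\bfrak$ implication is exactly the ``analogous'' argument the paper leaves to the reader, with the typing via clauses (a) and (b) of \autoref{modeltukey} handled properly. No gaps.
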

\begin{proof}
According to \autoref{modeltukey} choose functions $\Psi_-:X^{\Mbf_0}\to X'^{\Mbf}$ and $\Psi_+:Y^{\Mbf}\to Y^{\Mbf_0}$ fulfilling (a)-(b). We only prove $\dfrak(\Rbf')_{\Nbf}^{\Mbf}\Rightarrow\dfrak(\Rbf)_{\Nbf_0}^{\Mbf_{0}}$, since the second statement is analogous. To this end assume that $\dfrak(\Rbf')_{\Nbf}^{\Mbf}$ holds and show $\dfrak(\Rbf)_{\Nbf_0}^{\Mbf_{0}}$. 

Let $x\in X^{\Mbf_0}$ be arbitrary. There is a $y\in Y'^\Nbf$ such that $\Psi_-(x) \sqsubset'y$. Now, by \autoref{modeltukey}(a) we get $x\sqsubset\Psi_+(y)$ and $\Psi_+\in Y^{\Nbf_0}$ by (b).
\qedhere{\textrm{\ (\autoref{lem:tukey}})}
\end{proof}
\textbf{Objective.} The main motivation of this work is to prove some of the implications from \autoref{cichoncohen} and \autoref{cichonrandom} using Tukey-order with models.



\begin{figure}[!ht]
  \begin{center}
    \includegraphics[scale=0.75]{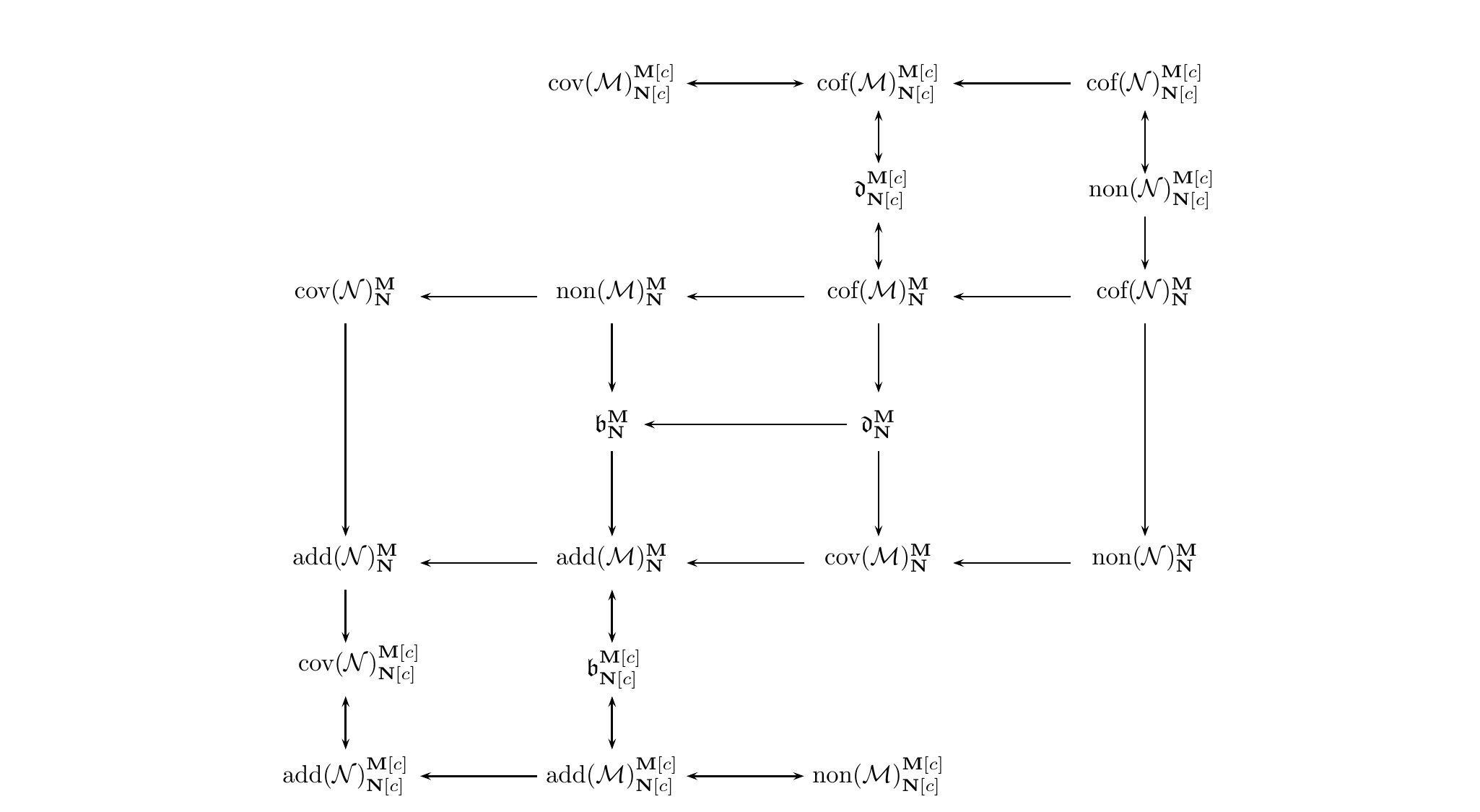}
  \caption{Cicho\'n's diagram with models after adding a Cohen real.}
  \label{cichoncohen}
  \end{center}
\end{figure}

\begin{figure}[!ht]
  \begin{center}
    \includegraphics[scale=0.75]{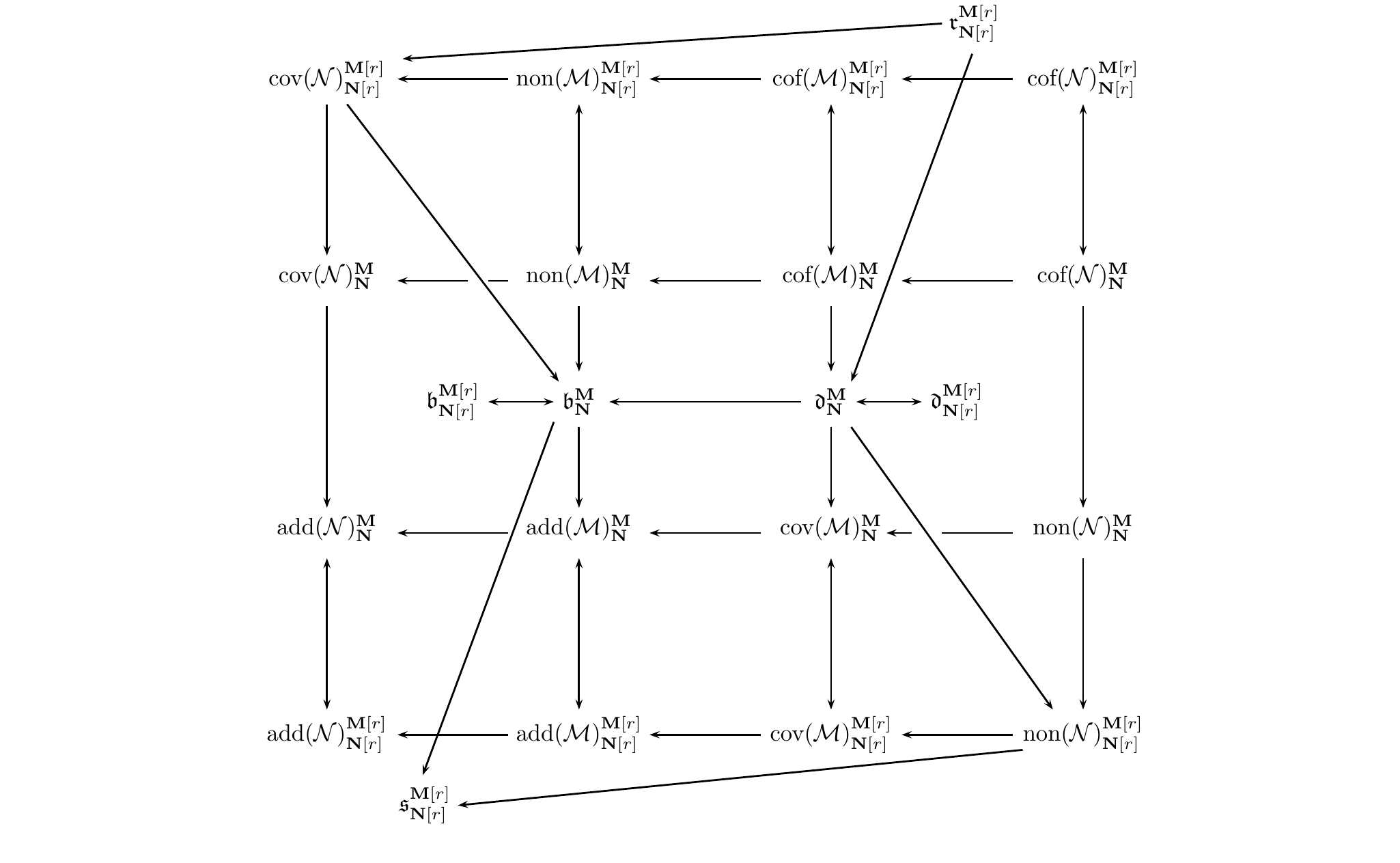}
  \caption{Cicho\'n's diagram with models after adding a random real.}
  \label{cichonrandom}
  \end{center}
\end{figure}

This paper is structured as follows: We  review  in \autoref{SecPrel} the  basic  notation and the results this paper is based on. We deal with the concept of  $\dfrak(\Rbf)_{\Nbf}^{\Mbf}$ and $\bfrak(\Rbf)_{\Nbf}^{\Mbf}$ in \autoref{bd}. We show in  \autoref{dbcohenrandom} the effect on $\dfrak(\Rbf)_{\Nbf}^{\Mbf}$ and $\bfrak(\Rbf)_{\Nbf}^{\Mbf}$ after of adding  a single Cohen real without goodness. Likewise after a single random real.

\section{Preliminaries}\label{SecPrel}
For a set $A\subseteq2^\omega\times2^\omega$ denote $A_x=\{y:\la x,y\ra\in A\}$ and $A^y=\{x:\la x,y\ra\in A\}$. Denote by $X^\omega$ the set of all maps from $\omega$ into $X$ considered as sequences of elements of $X$. 

Given a function $b$ with domain $\omega$ such that $b(i)\neq\emptyset$ for all $i<\omega$, $h\in \omega^\omega$ and $n<\omega$, define $\Scal(b,h)=\prod_{n<\omega}[b(n)]^{\leq h(n)}$. A \emph{slalom} is a function $\varphi:\omega\to[\omega]^{<\omega}$.

For functions $f$, $g\in\omega^\omega$ and $\varphi$ with domain $\omega$ we define 
\begin{itemize} 
    \item[(1)] $f\leq^*g$ iff $\exists n\in\omega\forall m\geq n(f(m)\leq g(m))$.
    \item[(2)] $f\neq^* g$ iff $\exists n\in\omega \forall m\geq n(f(m)\neq g(m))$.
    \item[(3)]  For a slalom $\varphi$, define 
    \begin{enumerate}
        \item[(i)] $f\in^{*}\varphi$ by $\exists m\in\omega\forall n\geq  m(f(n)\in \varphi(n))$, which is read \textit{$\varphi$ localizes $x$};
        \item[(ii)] $f \in^{\infty}\varphi$ iff $\forall n\in\omega\exists m\geq n(f(n)\in \varphi(n))$. Denote its negation by $f\not\in^\infty \varphi$, which is read \emph{$\varphi$ anti-localizes $f$}.
    \end{enumerate}
\end{itemize}

For $A, B\in[\omega]^{\aleph_0}$, define $A\propto B$ iff either $B\smallsetminus A$ is finite or $A\cap B$ is finite. Note that $A\not\propto B$ iff $A$ \textit{splits} $B$, that is $A\cap B$ and $B\smallsetminus A$ are infinite. Denote by $\Ior$ the set of interval partitions of $\omega$. For $I, J\in\Ior$, define  
\begin{itemize}
    \item[(i)] $I\sqsubseteq^{\Ior} J$ iff $\forall^{\infty}n\exists m(I_m\subseteq J_n)$.
    \item[(ii)] $I \ntriangleright J \text{\ iff } \forall^\infty n\, \forall m\, ( I_n\nsupseteq J_m)$
\end{itemize}
Let $\Por$ be a poset. For a model $\Mbf$ and a set $X$ denote $\Mbf^{\Por}:=\{\tau\in\Mbf:\textrm{$\tau$ is a $\Por$-name}\}$ and $\Mbf^{\Por}_{X}:=\{ \tau\in\Mbf^{\Por}:\,\,\Vdash \tau\in X\}$. Say that $\Por$ is a \textit{Suslin ccc forcing notion} if it is ccc and there is a Polish space $Z$ such that
\begin{itemize}
\item[(i)] $\Por\subseteq Z$,
\item[(ii)] $\leq_{\Por}\,\subseteq  Z\times Z$
is $\Sigma_1^1$ and
\item[(iii)]  $\bot_{\Por}\subseteq Z\times Z$ is $\Sigma_1^1$.
\end{itemize}
\begin{definition}
We say that $\Rbf=\la X,Y,\sqsubset\ra$ is a \textit{Polish relational system} if
\begin{enumerate}
\item[(I)] $X$ and $Y$ are Perfect Polish spaces, and
\item[(II)] $\sqsubset=\bigcup_{n\in\omega}\sqsubset_{n}$ where $\langle\sqsubset_{n}: n\in\omega\rangle$  is some increasing sequence of closed subsets of $X\times Y$ such that, for any $n<\omega$ and for any $y\in Y$,
$(\sqsubset_{n})^{y}=\{x\in X:x\sqsubset_{n}y \}$ is closed nowhere dense.
\end{enumerate}
The closed sets mentionated in (I) and (II) have an absolute definition, that is, as Borel sets they have the same Borel codes in all transitive models. Say that $x$ is \textit{$\Rbf$-unbounded over  $H$} if $\forall y \in H\cap Y(x\not\sqsubset y)$.
\end{definition}

Let $\Cor$ and $\Bor$ be the Cohen algebra and random algebra for adding one Cohen real and one random real, respectively.

Many cardinals characteristics can be described through simple relational systems. In the following example, we recall that some of the entries in Cicho\'n's diagram can be defined through simple relational systems.

\begin{example}
For any ideal $\Iwf$ on $2^{\omega}$.
\begin{itemize}
    \item[(a)] $\Cbf_\Iwf=\la2^\omega,\Iwf,\in\ra$, so $\bfrak(\Cbf_\Iwf)=\non(\Iwf)$ and $\dfrak(\Cbf_\Iwf)=\cov(\Iwf)$.
    \item[(b)] $\Iwf:=\la\Iwf,\subseteq\ra=\la\Iwf,\Iwf,\subseteq\ra$ is directed, $\bfrak(\Iwf)=\add(\Iwf)$ and $\dfrak(\Iwf)=\cof(\Iwf)$.
\end{itemize}
\end{example}

Given two relational systems $\Rbf$ and $\Rbf'$, we let $(\Rbf;\Rbf'):=\la X\times (X')^{Y},Y\times Y',\sqsubset_{;}\ra$ where  $(x,f)\sqsubset_{;}(a,b)$ means $x\sqsubset a$ and $f(a)\sqsubset^{\prime} b$. Hence $\dfrak(\Rbf;\Rbf'):=\dfrak(\Rbf)\cdot\dfrak(\Rbf')$  and $\bfrak(\Rbf;\Rbf')=\min\{\bfrak(\Rbf),\bfrak(\Rbf)\}$ by \cite[Thm. 4.11]{blass}.

We present some examples of the classical framework, that is, with instances of Polish relational systems.

\begin{example}\label{examplebasic}
The examples (i)-(v) are Polish relational systems. 
\begin{itemize} 
    \item[(i)] Combinatorial characterizations of $\dfrak$ and $\bfrak$. 
    \begin{itemize}
        \item[(a)]  Consider the relational system $\Dbf:=\la\omega^\omega,\omega^\omega,\leq^{*}\ra$. Define $\bfrak:=\bfrak(\Dbf)$ and $\dfrak:=\dfrak(\Dbf)$.
        \item[(b)] Define the relational systems $\Dbf_1:=\la\Ior,\Ior,\sqsubseteq^{\Ior}\ra$ and $\Dbf_2:=\la\Ibb,\Ibb,\ntriangleright\ra$. It was proved in \cite{blass} that $\Dbf\eqT\Dbf_1\eqT\Dbf_2$, so $\dfrak(\Dbf_1)=\dfrak(\Dbf_2)=\dfrak$ and $\bfrak(\Dbf_1)=\bfrak(\Dbf_2)=\bfrak$.
    \end{itemize}
    \item[(ii)] Combinatorial characterizations of $\cov(\Mwf)$ and $\non(\Mwf)$.
    
    \begin{itemize}
       
        \item[(a)]  Define $\Ed:=\la\omega^\omega,\omega^\omega\neq^*\ra$. Since $\Cbf_\Mwf\leqT\Ed$, $\bfrak(\Ed)\leq\non(\Mwf)$ and $\dfrak(\Ed)\leq\cov(\Mwf)$. Even more, $\bfrak(\Ed)=\non(\Mwf)$ and $\dfrak(\Ed)=\cov(\Mwf)$ (\cite[Thm. 2.4.1 \& Thm. 2.4.7]{BJ}).
        \item[(b)] Let $b$ be a function with domain $\omega$ such that $b(i)\neq\emptyset$ for all $i<\omega$, and let $h\in\omega^\omega$. Define $\aLc(b,h):=\la\Scal(b,h),\prod b,\not\ni^\infty\ra$, so put $\balc_{b,h}:=\bfrak(\aLc(b,h))$ and $\dalc_{b,h}:=\dfrak(\aLc(b,h))$, which we refer to as \emph{anti-localization cardinals}. If $h\geq^*1$ then $\aLc(\omega,h)\eqT\Ed$, so $\balc_{\omega,h}=\non(\Mwf)$ and $\dalc_{\omega,h}=\cov(\Mwf)$ (\cite[Thm. 2.4.1 \& Thm. 2.4.7]{BJ}). Here $\omega$ denotes the constant function $b(n)=\omega$. 
     \end{itemize}
     
     \item[(iii)]  Define $\Omega_n:=\{a\in [2^{<\omega}]^{<\aleph_0}:\lambda^*(\bigcup_{s\in a}[s])\leq 2^{-n}\}$ (endowed with the discrete topology) and put $\Omega:=\prod_{n<\omega}\Omega_n$ with the product topology, which is a perfect Polish space. For every $x\in \Omega$ denote $N_{x}^{*}:=\bigcap_{n<\omega}\bigcup_{s\in x(n)}[s]$, which is clearly a Borel null set in $2^{\omega}$.

     Define the relational system $\Cn:=\la \Omega, 2^\omega, \sqsubset\ra$ where $x\sqsubset z$ iff $z\notin N_{x}^{*}$. Recall that any null set in $2^\omega$ is a subset of $N_{x}^{*}$ for some $x\in \Omega$, even more $\Cn\eqT\Cbf_\Nwf^\perp$. Hence, $\bfrak(\Cn)=\cov(\Nwf)$ and $\dfrak(\Cn)=\non(\Nwf)$.

     \item[(iv)]  For each $k<\omega$ let $\id^k:\omega\to\omega$ such that $\id^k(i)=i^k$ for all $i<\omega$ and $\Hcal:=\{\id^{k+1}:k<\omega\}$. Let $\Lc^*:=\la\omega^\omega, \Scal(\omega, \Hcal), \in^*\ra$ be the Polish relational system where \[\Scal(\omega, \Hcal):=\{\varphi:\omega\to[\omega]^{<\aleph_0}:\exists{h\in\Hcal}\forall{i<\omega}(|\varphi(i)|\leq h(i))\}.\]
     As consequence of~\cite[Thm.~2.3.9]{BJ}, $\bfrak(\Lc^*)=\add(\Nwf)$ and $\dfrak(\Lc^*)=\cof(\Nwf)$. In fact, $\Nwf\eqT\Lc^*$.

     \item[(v)] Consider the relational system $\Sbf:=\la[\omega]^{\aleph_0},[\omega]^{\aleph_0},\propto\ra$, so $\bfrak(\Sbf)=\sfrak$ and $\dfrak(\Sbf)=\rfrak$, which are known as \textit{the splitting number} and \textit{unreaping number}, respectively.
     
     \item[(vi)] Denote $\Xi:=\{f:2^{<\omega}\to2^{<\omega}:\forall s\in2^{<\omega}(s\subseteq f(s))\}$.  For $f\in\Xi$ define $G_f:=\bigcap_{n<\omega}\bigcup_{|s|\geq n}[f(s)]$. Define the relational system
     $\Cf:=\la \Xi,\Xi,\sqsubset^\Mwf\ra$ where$f \sqsubset^\Mwf g$ iff $G_g\subseteq G_f$. Recall that any element of $\Xi$ codes a member of $\Mwf$, even more $\Cf\eqT\Mwf$. Note that $\sqsubset^\Mwf$ is quite complex (it is $\boldsymbol{\Pi}^1_1$).
\end{itemize}
\end{example}

We say that $\rho\subseteq[\omega]^{\aleph_0}$ is an $\omega$-\textit{splitting family} if $\forall A\subseteq[\omega]^{\aleph_0}\exists x\in\rho\forall y\in A(x\not\propto y)$. Define $\sfrak_\omega:=\min\{|\rho|:\rho\textrm{\ is $\omega$-splitting}\}$.

\section{The concepts of  \texorpdfstring{$\dfrak(\Rbf)_{\Nbf}^{\Mbf}$}{} and \texorpdfstring{$\bfrak(\Rbf)_{\Nbf}^{\Mbf}$}{}}\label{bd}

The next result show that all standard proof of inequalities between cardinals in Cicho\'n's diagram can be adapted to \autoref{modeltukey}. 

\begin{theorem}[{\cite{b},\cite{fr},\cite{bar} and \cite{paw85}}]\label{cicr}
Let $\Nbf, \Mbf$ be models of ZFC. Then  
\begin{itemize}
    \item[(a)] $\Cbf_\Mwf\preceq_{\Nbf,\Nbf}^{\Mbf,\Mbf}\Cbf_\Nwf^\perp$.  
    \item[(b)] $\Cbf_\Mwf\preceq_{\Nbf,\Nbf}^{\Mbf,\Mbf}\Dbf$. 
    \item[(c)] $\Dbf^{\perp}\preceq_{\Nbf,\Nbf}^{\Mbf,\Mbf}\Dbf\preceq_{\Nbf,\Nbf}^{\Mbf,\Mbf}\Mwf$. 
    \item[(d)] $\Cbf_\Mwf^\perp\preceq_{\Nbf,\Nbf}^{\Mbf,\Mbf}\Mwf$ and $\Cbf^{\perp}_\Nwf\preceq_{\Nbf,\Nbf}^{\Mbf,\Mbf}\Nwf$.
    \item[(e)] $\Mwf\preceq_{\Nbf,\Nbf}^{\Mbf,\Mbf}\Nwf$.
    \item[(f)] $\Mwf\preceq_{\Nbf,\Nbf}^{\Mbf,\Mbf}(\Cbf^{\perp}_\Mwf;\Dbf)$
\end{itemize}
As consequence we get,

\begin{itemize}
    \item[(i)] $\non(\Nwf)_\Nbf^\Mbf\Rightarrow\cov(\Mwf)_\Nbf^\Mbf$ and $\non(\Mwf)_\Nbf^\Mbf\Rightarrow\cov(\Nwf)_\Nbf^\Mbf$.
    \item[(ii)] $\dfrak_\Nbf^\Mbf\Rightarrow\cov(\Mwf)_\Nbf^\Mbf$ and $\non(\Mwf)_\Nbf^\Mbf\Rightarrow\bfrak_\Nbf^\Mbf$.
    \item[(iii)] $\bfrak_\Nbf^\Mbf\Rightarrow\add(\Mwf)_\Nbf^\Mbf$ and $\cof(\Mwf)_\Nbf^\Mbf\Rightarrow\dfrak_\Nbf^\Mbf$ and $\dfrak_\Nbf^\Mbf\Rightarrow\bfrak_\Nbf^\Mbf$.
    \item[(iv)] $\cof(\Mwf)_\Nbf^\Mbf\Rightarrow\non(\Mwf)_\Nbf^\Mbf$ and $\cov(\Mwf)_\Nbf^\Mbf\Rightarrow\add(\Mwf)_\Nbf^\Mbf$ and $\cof(\Nwf)_\Nbf^\Mbf\Rightarrow\non(\Nwf)_\Nbf^\Mbf$ and $\cov(\Nwf)_\Nbf^\Mbf\Rightarrow\add(\Nwf)_\Nbf^\Mbf$.
    \item[(v)] $\cof(\Nwf)_\Nbf^\Mbf\Rightarrow\cof(\Mwf)_\Nbf^\Mbf$ and $\add(\Mwf)_\Nbf^\Mbf\Rightarrow\add(\Nwf)_\Nbf^\Mbf$.
    \item[(vi)] $\non(\Mwf)_\Nbf^\Mbf$ and $\dfrak_\Nbf^\Mbf$ $\Rightarrow\cof(\Mwf)_\Nbf^\Mbf$.
    \end{itemize}
    As a consequence of (iii), (iv) and (vi),
    \begin{itemize}
       \item[(vii)] $\cof(\Mwf)_\Nbf^\Mbf\Leftrightarrow\non(\Mwf)_\Nbf^\Mbf$ 
    and $\dfrak_\Nbf^\Mbf$.
    \end{itemize}
\end{theorem}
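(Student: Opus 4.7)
The plan is to verify each of (a)--(f) by invoking the classical Tukey reduction (as attributed in the statement) and checking that the witnessing maps $\Psi_-$ and $\Psi_+$ are defined from absolute Borel codes. Once this is observed, condition (b) of \autoref{modeltukey} is immediate by Borel absoluteness between transitive ZFC-models: a Borel function whose code lies in $\Nbf$ sends reals of $\Nbf$ into reals of $\Nbf$, so if both $\Nbf=\Nbf_0$ and the codes live in $\Nbf$, the maps restrict correctly. The consequences (i)--(vii) follow directly by applying \autoref{lem:tukey} to (a)--(f).

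For (a), I would realize Rothberger's duality by a Borel set $A\subseteq 2^\omega\times 2^\omega$ whose vertical sections are null and horizontal sections are meager; then $\Psi_-(x):=A_x$ and $\Psi_+(y):=A^y$ witness $\Cbf_\Mwf\preceq_{\Nbf,\Nbf}^{\Mbf,\Mbf}\Cbf_\Nwf^\perp$, since $y\notin A_x$ is equivalent to $x\notin A^y$. For (b), I would use the interval-partition characterization of $\Mwf$ from \autoref{examplebasic}(i)(b): a meager set is coded by an $f\in\omega^\omega$ via a Borel procedure. For the $\Dbf^\perp\preceq\Dbf$ half of (c), I may take the identity on the $X$-side and $g\mapsto g+1$ on the $Y$-side; for $\Dbf\preceq\Mwf$, the standard construction assigning an interval-partition dominating real to an $F_\sigma$ meager code is Borel. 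Part (d) uses standard Borel assignments of covering meager/null sets from single reals. Part (e) is Bartoszyński's slalom reduction $\Mwf\preceq\Nwf$, whose encoding of a meager set into a null slalom is Borel. Part (f) is Miller--Truss: a meager set is recovered as a Borel function of a meagering real and a dominating real, so the composite witness into $(\Cbf_\Mwf^\perp;\Dbf)$ is Borel.

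For (i)--(vi), \autoref{lem:tukey} yields each implication from the corresponding reduction. The only nuance appears in (vi), where one must note that $\dfrak(\Rbf;\Rbf')_\Nbf^\Mbf$ holds as soon as both $\dfrak(\Rbf)_\Nbf^\Mbf$ and $\dfrak(\Rbf')_\Nbf^\Mbf$ do: given $(x,f)\in X^\Mbf\times(X'^\Mbf)^{Y^\Mbf}$, use the hypothesis on $\Rbf$ to pick $a\in Y^\Nbf$ with $x\sqsubset a$, and then the hypothesis on $\Rbf'$ to pick $b\in Y'^\Nbf$ with $f(a)\sqsubset' b$. Combined with (f) this yields $\non(\Mwf)_\Nbf^\Mbf\wedge\dfrak_\Nbf^\Mbf\Rightarrow\cof(\Mwf)_\Nbf^\Mbf$. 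Finally, (vii) is obtained by combining $\cof(\Mwf)_\Nbf^\Mbf\Rightarrow\dfrak_\Nbf^\Mbf$ from (iii), $\cof(\Mwf)_\Nbf^\Mbf\Rightarrow\non(\Mwf)_\Nbf^\Mbf$ from (iv), and the converse from (vi).

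The main obstacle I expect is part (f): the Truss--Miller decomposition passes through the auxiliary composite system $(\Cbf_\Mwf^\perp;\Dbf)$, where one must track that both the first coordinate (a covering meager set) and the second (a dominating real, indexed by the choice of witness on the first side) are produced by Borel procedures with codes in $\Nbf$. A secondary difficulty is part (e): Bartoszyński's translation from meager to null sets goes through slaloms and interval partitions, so one must transcribe several layers of coding while keeping each Borel and absolute. In both cases the needed witnesses are well-known to be Borel in the classical literature, so the work here is essentially bookkeeping in the two-model framework.
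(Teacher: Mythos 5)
Your proposal follows the same route the paper takes: the paper gives no proof of this theorem beyond citing the classical reductions and the observation (stated just before the theorem) that the standard witnessing maps are Borel-definable and absolute, so that condition (b) of \autoref{modeltukey} is automatic and the consequences (i)--(vii) drop out of \autoref{lem:tukey}; your treatment of (vi) via the composed system $(\Cbf^{\perp}_\Mwf;\Dbf)$ is exactly the intended use of (f). One local slip to repair in (a): with a Borel $A\subseteq2^\omega\times2^\omega$ whose vertical sections are null and horizontal sections are meager, setting $\Psi_-(x):=A_x$ and $\Psi_+(y):=A^y$ does \emph{not} verify the morphism condition. You need $y\notin A_x\Rightarrow x\in\Psi_+(y)$, but, as you yourself note, $y\notin A_x$ is equivalent to $x\notin A^y$; demanding both would force $A^y=2^\omega$, contradicting meagerness. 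The standard fix is to take $A=\{(x,y):x+y\in E\}$ with $E$ a Borel null \emph{comeager} set (so vertical sections are null and horizontal sections are comeager) and put $\Psi_+(y):=2^\omega\smallsetminus A^y$, which is meager and has its code computed from $y$ and the fixed code of $E$, so the $\Nbf$-into-$\Nbf$ requirement still holds. With that correction the rest of your outline is sound.
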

Though $\add(\Mwf)_\Nbf^\Mbf$ does not transform into $\bfrak_\Nbf^\Mbf$ and $\cov(\Mwf)_\Nbf^\Mbf$,  the following lemma gives the required characterization.

\begin{lemma}[{\cite[Cor. 1.3]{paw86}}]\label{cor:charadd}
$\neg\add(\Mwf)_\Nbf^\Mbf$ iff there exists $c\in\Mbf$, a Cohen real over $\Nbf$, and a function in $\omega^\omega\cap\Mbf$ which dominates any function from $\Nbf[c]$.
\end{lemma}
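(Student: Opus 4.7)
The plan is to use Bartoszy\'nski's \emph{chopped real} characterization of meager sets: a Borel set $B\subseteq 2^\omega$ is meager iff there exist $I\in \Ior$ and $y\in 2^\omega$ with
\[
B\subseteq M(I,y):=\{x\in 2^\omega:\forall^{\infty} n\,(x\upharpoonright I_n\neq y\upharpoonright I_n)\}.
\]
Since the chopped-real code is absolute between transitive models of \textrm{ZFC}, the meager Borel sets coded in a model correspond exactly to the chopped reals in the same model.

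For the direction $(\Leftarrow)$, suppose $c \in \Mbf$ is Cohen over $\Nbf$ and $f \in \omega^\omega \cap \Mbf$ dominates every function in $\omega^\omega \cap \Nbf[c]$. I would build a fast-growing interval partition $J \in \Ior \cap \Mbf$ from $f$ so that, for every $(I,z) \in \Nbf$, each sufficiently large $J_n$ spans many consecutive blocks of $I$. The key observation is that the set $T(I,z) := \{m : c \upharpoonright I_m = z \upharpoonright I_m\}$ is infinite by Cohen-genericity of $c$ over $\Nbf$, and its gap function lies in $\omega^\omega \cap \Nbf[c]$, so is dominated by $f$; once the blocks of $J$ outrun these gaps, each large $J_n$ contains some $I_m$ with $m \in T(I,z)$, and then any $x\in M(I,z)$ satisfies $x\upharpoonright I_m\neq z\upharpoonright I_m=c\upharpoonright I_m$, forcing $x\upharpoonright J_n\neq c\upharpoonright J_n$ and hence $M(I,z) \subseteq M(J,c)$. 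By the chopped real characterization inside $\Nbf$, $\bigcup \Mwf^\Nbf \subseteq M(J,c) \in \Mwf^\Mbf$, giving $\neg \add(\Mwf)_\Nbf^\Mbf$.

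For the main direction $(\Rightarrow)$, assume $\bigcup \Mwf^\Nbf \subseteq F$ for some $F \in \Mwf^\Mbf$, and take a chopped real $(J,y) \in \Mbf$ with $F \subseteq M(J,y)$. Set $c := y$; since $y \notin M(J,y)$ trivially, $y$ avoids every Borel meager set coded in $\Nbf$, hence is Cohen over $\Nbf$. For the dominating function, take $f \in \omega^\omega \cap \Mbf$ extracted from the block structure of $J$ (for instance $f(n) = \max J_n$). For any $g \in \omega^\omega \cap \Nbf[c]$ with $\Cor$-name $\dot g \in \Nbf^\Cor$, one encodes in $\Nbf$ an ``escape set'' of $\dot g$ from the blocks of $J$ as a Borel meager set $B_{\dot g}$, whose inclusion in $M(J,c)$ together with $y\notin M(J,c)$ forces $g\leq^* f$. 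The main obstacle is precisely this step: producing $B_{\dot g}\in\Nbf$ whose defining parameters do not mention $J$ (living only in $\Mbf$) yet whose containment in $M(J,c)$ pins $\dot g[c]$ inside the blocks of $J$. Since Cohen forcing is not $\omega^\omega$-bounding, no function in $\omega^\omega\cap\Nbf$ bounds $\dot g[c]$, so the dominating function must truly come from $\Mbf$; the trick, in the spirit of Pawlikowski's Lemma~1.2 of \cite{paw86}, is to parametrize the escape of $\dot g$ by a chopped real in $\Nbf$ whose containment in $M(J,c)$ translates directly into the block-bound furnished by $J$.
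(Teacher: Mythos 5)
The paper itself gives no proof of this lemma (it is quoted from \cite[Cor.~1.3]{paw86}), so your sketch has to be measured against the standard Pawlikowski-style argument. Your $(\Leftarrow)$ direction is essentially complete and correct: build $J\in\Ior\cap\Mbf$ from $f$ so that its blocks eventually outrun the gap functions of the matching sets $T(I,z)$, which lie in $\omega^\omega\cap\Nbf[c]$, and conclude $M(I,z)\subseteq M(J,c)$ for every chopped real $(I,z)\in\Nbf$, hence $\bigcup\Mwf^\Nbf\subseteq M(J,c)\in\Mwf^\Mbf$. That is the right argument, and the single $J$ does work for all $(I,z)$ simultaneously because it is built from $f$ alone.

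In the $(\Rightarrow)$ direction, however, there is a genuine gap, which you yourself flag as ``the main obstacle'' and then leave unresolved: Cohen-ness of $c:=y$ is fine, but the domination step is only gestured at, and the gesture omits exactly the idea that makes it work. The missing construction is this: in $\Nbf$, from a Cohen name $\dot g$ for an element of $\omega^\omega$, build a chopped real $(I,z)$ with $I_m=[a_m,a_{m+1})$ such that for \emph{every} $s\in 2^{a_m}$ the condition $s^{\smallfrown}(z\upharpoonright I_m)$ decides $\dot g\upharpoonright a_m$ with all values below $a_{m+1}$; this ``universal decider'' block is obtained by running through the finitely many $s\in2^{a_m}$ and successively extending one finite string $w$ so that each $s^{\smallfrown}w$ decides (decisions persist under extension), then setting $z\upharpoonright I_m:=w$. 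One then needs the combinatorial content of the inclusion $M(I,z)\subseteq M(J,y)$ (this is Pawlikowski's Lemma~1.2, proved by constructing a real that copies $y$ on infinitely many $J$-blocks while escaping $z$ on all large $I$-blocks): almost every $J_n$ contains some $I_m$ with $y\upharpoonright I_m=z\upharpoonright I_m$. On such a block, $(y\upharpoonright a_m)^{\smallfrown}(z\upharpoonright I_m)$ decides $\dot g\upharpoonright a_m$ below $a_{m+1}\leq\max J_n+1$, so $\dot g[y](k)\leq\max J_{n+1}$ for $k\in J_n$ eventually, i.e., $f(k):=\max J_{n+1}$ for $k\in J_n$ works uniformly in $\dot g$ (note the index shift from your $f(n)=\max J_n$). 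Be aware that your weaker fallback ``$B_{\dot g}\subseteq M(J,c)$ together with $y\notin M(J,c)$'' cannot suffice as stated: from $y$'s own failure to belong to an $\Nbf$-coded meager set one only extracts infinitely-often bounds measured by a partition living in $\Nbf$, never the eventual bounds measured by $J$; it is precisely the localization ``a matching (deciding) $I$-block inside almost every $J$-block'' that converts the covering hypothesis into $\leq^*$-domination by a $J$-definable function. Without these two ingredients the forward direction is not yet proved.
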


\autoref{modelCichon} shows the implications between the sentences of the cardinal characteristics associated with $\Mwf$, $\Nwf$, $\bfrak$ and $\dfrak$.
\begin{figure}[!h]
 \begin{center}
    \includegraphics[scale=0.935]{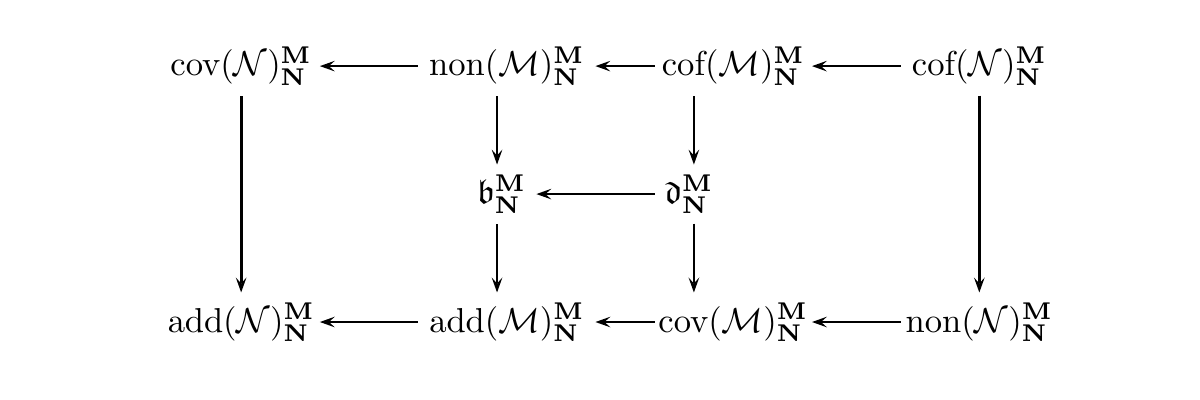}
     \caption{Cicho\'n's diagram with models $\Nbf\subseteq\Mbf$ of ZFC.}
     \label{modelCichon}
\end{center}
\end{figure}

From now on, fix a Suslin ccc poset $\Por$. One interesting case of \autoref{modeltukey} is when  $\Rbf\preceq_{\Nbf,\Nbf[\Por\cap G]}^{\Mbf,\Mbf[G]}\Rbf'$ where $\Nbf\subseteq\Mbf$ and $G$ is a $\Por$-generic over $\Mbf$. A similar definition holds for $\Mbf^\Por$ and $\Nbf^\Por$, more concretely:



\begin{definition}\label{def:tukername}
Fix two models $\Nbf\subseteq\Mbf$ of ZFC. Let $\Por$ be a Suslin ccc forcing notion, let $\Rbf$ and $\Rbf'$ be two real definable relational systems. 

\begin{itemize}
    \item[(1)] $\Rbf\preceq_{\Nbf,\Nbf^{\Por}}^{\Mbf,\Mbf^{\Por}}\Rbf'$ if there are maps $\Psi_-:X^\Mbf\to \Mbf^{\Por}_{X'}$ and $\Psi_+:\Mbf^{\Por}_{Y'}\to Y^\Mbf$ such that 
\begin{itemize}
    \item[(1.1)] for any $x\in X^\Mbf$ and for any $y'\in \Mbf^{\Por}_{Y'}$,  $\Vdash``\Psi_-(x) \sqsubset y'$ implies $x \sqsubset' \Psi_+(y')"$. 
    \item[(1.2)] $\Psi_-[X^\Nbf]\subset \Nbf^{\Por}_{X'}$ and $\Psi_+[\Nbf^{\Por}_{Y'}]\subset Y^\Nbf$.
\end{itemize}
    \item[(2)]  $\Rbf\preceq_{\Nbf^\Por,\Nbf}^{\Mbf^\Por,\Mbf}\Rbf'$ if there are maps $\Psi_-:\Mbf^{\Por}_{X}\to X'^\Mbf$ and $\Psi_+:(Y')^ \Mbf\to \Mbf^{\Por}_{Y}$ such that 
    \begin{itemize}
    \item[(2.1)] for any $x\in \Mbf^{\Por}_{X}$ and for any $y'\in (Y')^\Mbf$,  $\Psi_-(x) \sqsubset y'$ implies $\Vdash x \sqsubset' \Psi_+(y')$. 
    \item[(2.2)] $\Psi_-[\Nbf^{\Por}_{X}]\subset (X')^\Nbf$ and $\Psi_+[(Y')^\Nbf]\subset \Nbf^{\Por}_{Y}$.
\end{itemize}
\end{itemize}
\end{definition}

The next result shows the main reason for considering 
the above definition.   

\begin{lemma}
Fix $\Nbf\subseteq\Mbf$ models of ZFC. Let $G$ be a $\Por$-generic over $\Mbf$. 
\begin{itemize}
    \item[(i)] If  $\Rbf\preceq_{\Nbf,\Nbf^{\Por}}^{\Mbf,\Mbf^{\Por}}\Rbf'$ then $\Rbf\preceq_{\Nbf,\Nbf[\Por\cap G]}^{\Mbf,\Mbf[G]}\Rbf'$.
    \item[(ii)] If  $\Rbf\preceq_{\Nbf^{\Por},\Nbf}^{\Mbf^{\Por},\Mbf}\Rbf'$ then $\Rbf\preceq_{\Nbf[\Por\cap G],\Nbf}^{\Mbf[G],\Mbf}\Rbf'$.
\end{itemize}
\end{lemma}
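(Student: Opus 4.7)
The strategy is to transfer the name-based maps to the generic-extension setting by composing with the interpretation map $\tau\mapsto\tau[G]$. I treat (i) in detail; (ii) is entirely symmetric.

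For (i), let $\Psi_-\colon X^\Mbf\to\Mbf^\Por_{X'}$ and $\Psi_+\colon\Mbf^\Por_{Y'}\to Y^\Mbf$ witness $\Rbf\preceq_{\Nbf,\Nbf^\Por}^{\Mbf,\Mbf^\Por}\Rbf'$. Define $\widetilde\Psi_-\colon X^\Mbf\to X'^{\Mbf[G]}$ by $\widetilde\Psi_-(x):=\Psi_-(x)[G]$; the range is contained in $X'^{\Mbf[G]}$ because $\Vdash\Psi_-(x)\in X'$. For $\widetilde\Psi_+\colon Y'^{\Mbf[G]}\to Y^\Mbf$, use the axiom of choice to select, for each $y'\in Y'^{\Mbf[G]}$, a name $\dot y'\in\Mbf^\Por_{Y'}$ with $\dot y'[G]=y'$, taking $\dot y'\in\Nbf^\Por_{Y'}$ whenever $y'\in Y'^{\Nbf[\Por\cap G]}$ (which is possible by the very definition of that set); then set $\widetilde\Psi_+(y'):=\Psi_+(\dot y')$.

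I would then verify the two clauses of \autoref{modeltukey}. For clause (a), suppose $\widetilde\Psi_-(x)\sqsubset' y'$ in $\Mbf[G]$, i.e.\ $\Psi_-(x)[G]\sqsubset'\dot y'[G]$. Pick $p\in G$ forcing this relation; by (1.1), the same $p$ forces $x\sqsubset\Psi_+(\dot y')$. Since $\sqsubset$ is analytic and both $x$ and $\Psi_+(\dot y')=\widetilde\Psi_+(y')$ live in $\Mbf$, Shoenfield absoluteness pulls this statement back to $\Mbf$, giving $x\sqsubset\widetilde\Psi_+(y')$. For clause (b), if $x\in X^\Nbf$, then $\Psi_-(x)\in\Nbf^\Por_{X'}$ by (1.2); and because $\Por$ is Suslin ccc (hence absolutely defined) and $G\cap\Por^\Nbf$ is $\Por^\Nbf$-generic over $\Nbf$, we have $\Psi_-(x)[G]=\Psi_-(x)[G\cap\Nbf]\in X'^{\Nbf[\Por\cap G]}$. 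Similarly, when $y'\in Y'^{\Nbf[\Por\cap G]}$, our hybrid choice of $\dot y'$ lies in $\Nbf^\Por_{Y'}$, so (1.2) yields $\widetilde\Psi_+(y')\in Y^\Nbf$.

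For (ii), the dual construction sets $\widetilde\Psi_-(x):=\Psi_-(\dot x)$, with $\dot x$ a fixed name for $x$ chosen in $\Nbf^\Por_X$ whenever $x\in X^{\Nbf[\Por\cap G]}$, and $\widetilde\Psi_+(y'):=\Psi_+(y')[G]$; the verification mirrors the above, invoking (2.1) and (2.2) in place of (1.1) and (1.2), and now the absoluteness step goes the other way, namely a ground-model relation $\Psi_-(\dot x)\sqsubset' y'$ is forced, and the consequence $x\sqsubset\Psi_+(y')[G]$ lives in $\Mbf[G]$. The only genuinely non-trivial ingredient — and hence the main obstacle — is the pair of absoluteness facts: that analytic relations on ground-model reals are preserved between $\Mbf$ and $\Mbf[G]$, and that because $\Por$ is Suslin ccc the filter $G\cap\Por^\Nbf$ is $\Por^\Nbf$-generic over $\Nbf$, so that the interpretations of names in $\Nbf$ and $\Mbf$ really do agree. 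Once these standard facts are in place, the rest is a mechanical unravelling of the definitions.
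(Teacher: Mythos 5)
The paper states this lemma without proof, and your argument is exactly the intended routine one: interpret names by $G$ on one side, choose (by mixing) names in $\Mbf^{\Por}_{Y'}$, resp.\ $\Nbf^{\Por}_{X}$, on the other, and use that for a Suslin ccc forcing the restricted filter $G\cap\Por^{\Nbf}$ is $\Por^{\Nbf}$-generic over $\Nbf$, together with absoluteness of the analytic relations $\sqsubset,\sqsubset'$ between $\Mbf$ and $\Mbf[G]$. The two points you pass over quickly --- that a name for $y'\in Y'^{\Mbf[G]}$ can be arranged to satisfy $\Vdash\dot y'\in Y'$ outright, and that $\Nbf^{\Por}_{Y'}\subseteq\Mbf^{\Por}_{Y'}$ so that $\Psi_+$ may be applied to the name chosen in $\Nbf$ --- are standard for Suslin ccc forcing and are already implicit in \autoref{def:tukername}, so your proof is correct as it stands.
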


Now we give an application of \autoref{def:tukername}. For this, consider the following definition:  For a relational system $\Rbf=\la X, Y,\sqsubset\ra$ we let $\Rbf_\omega:=\la X, [Y]^\omega, \sqsubset_\omega\ra$ where $x \sqsubset_\omega \Bar{y}$ if $\exists n<\omega(x \sqsubset y_n)$, which is a relational system.

\begin{definition}[\cite{JS}]\label{def:goodness}
A notion forcing $\Por$ is $\Rbf$-\textit{good} if, for any $\Por$-name $\dot{h}$ for a member of $Y$, there exists  a non-empty countable set $H\subseteq Y$ (in the ground model) such that, for any $x\in X$, if $x$ is $\Rbf$-unbounded over  $H$ then $\Vdash x \not\sqsubset \dot{h}$.
\end{definition}

\begin{lemma}\label{lem:gnltukeyposet}
If $\Por$ is $\Rbf$-good iff  $\Rbf^{\perp}\preceq_{\Vbf^\Por,\Vbf}^{\Vbf^\Por,\Vbf}\Rbf^{\perp}_\omega$ when $\Psi_+$ is the identity map.
\end{lemma}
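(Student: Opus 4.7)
The plan is to unpack both sides of the equivalence, fixing the convention that $\Rbf^{\perp}_\omega$ denotes $(\Rbf_\omega)^{\perp}=\la [Y]^\omega,X,\sqsubset^{\perp}_\omega\ra$, where $\bar{y}\sqsubset^{\perp}_\omega x$ iff $\forall n<\omega\,(x\not\sqsubset y_n)$, and then applying clause~(2) of \autoref{def:tukername} with $\Nbf=\Mbf=\Vbf$, with $\Rbf$ in the definition played by $\Rbf^{\perp}=\la Y,X,\sqsubset^{\perp}\ra$ and $\Rbf'$ by $\Rbf^{\perp}_\omega$. Under this reading, a witness to $\Rbf^{\perp}\preceq^{\Vbf^\Por,\Vbf}_{\Vbf^\Por,\Vbf}\Rbf^{\perp}_\omega$ consists of maps $\Psi_-\colon\Vbf^{\Por}_{Y}\to ([Y]^\omega)^{\Vbf}$ and $\Psi_+\colon X^{\Vbf}\to\Vbf^{\Por}_{X}$ such that, for every $\dot h\in\Vbf^{\Por}_Y$ and every $x\in X^{\Vbf}$,
\[
\Psi_-(\dot h)\sqsubset^{\perp}_\omega x \ \Longrightarrow\ \Vdash x\not\sqsubset\dot h,
\]
where the right-hand side uses $\Psi_+(x)=\check x$ when $\Psi_+$ is the identity. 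Clause~(2.2) is automatic because $\Nbf=\Mbf$, and the hypothesis $\Psi_-(\dot h)\sqsubset^{\perp}_\omega x$ translates, once we let $H:=\Psi_-(\dot h)\subseteq Y$, to ``$x$ is $\Rbf$-unbounded over $H$''.

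For the forward direction, I would assume $\Por$ is $\Rbf$-good and, via a choice function over $\Por$-names (modulo canonical representatives to stay within a set), attach to each $\dot h\in\Vbf^{\Por}_Y$ a non-empty countable $H_{\dot h}\subseteq Y$ in $\Vbf$ witnessing goodness as in \autoref{def:goodness}. Setting $\Psi_-(\dot h):=H_{\dot h}$ and $\Psi_+:=\id$, the defining property of $H_{\dot h}$ becomes verbatim the implication displayed above, which is exactly the remaining clause~(2.1). Hence the requested witness exists.

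For the converse, I would assume $\Psi_-$ and $\Psi_+=\id$ are given and pick an arbitrary $\Por$-name $\dot h$ for a member of $Y$. Setting $H:=\Psi_-(\dot h)$ yields a countable subset of $Y$ in $\Vbf$; clause~(2.1) applied to $H$ reads word-for-word as: whenever $x\in X^{\Vbf}$ is $\Rbf$-unbounded over $H$, then $\Vdash x\not\sqsubset\dot h$. That is precisely the conclusion of \autoref{def:goodness}. The only minor nuisance is the non-emptiness clause in \autoref{def:goodness}: if $H=\emptyset$, replace $H$ by $\{y_0\}$ for any fixed $y_0\in Y$, noting that enlarging $H$ only strengthens ``$\Rbf$-unbounded over $H$'' and therefore preserves the implication.

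The main (and really the only) pitfall is notational rather than mathematical: one must commit to the convention that in $\Rbf^{\perp}_\omega$ the perp is applied \emph{after} the $\omega$-construction rather than before, since the alternative reading $(\Rbf^{\perp})_\omega=\la Y,[X]^\omega,(\sqsubset^\perp)_\omega\ra$ gives an existential rather than universal quantifier over $n$ and does not match the universally quantified ``$\forall y\in H$'' occurring in the definition of goodness. Once that convention is fixed, the lemma is a direct dictionary translation between \autoref{def:goodness} and \autoref{def:tukername}(2).
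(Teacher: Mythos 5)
Your proof is correct and follows essentially the same route as the paper's: both directions are the same dictionary translation between \autoref{def:goodness} and \autoref{def:tukername}(2), taking $\Psi_-(\dot h)$ to be the countable witness set for goodness and $\Psi_+=\id$ (the paper likewise reads $\Rbf^{\perp}_\omega$ so that $\bar y\sqsubset^{\perp}_\omega x$ means $x\not\sqsubset y_n$ for all $n$, and in the converse takes $H=\ran(\Psi_-(\dot h))$ exactly as you do). Your extra remarks on the convention for the perp and on non-emptiness are harmless refinements of the same argument.
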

\begin{proof}
Let $\dot{h}$ be a $\Por$-name for a member of $Y$. Then, in $\Vbf$, choose a non-empty countable set $\bar y_{\dot h}\subseteq Y$ such that, for any $x\in X$, if $x$ is $\Rbf$-unbounded over  $\bar y_{\dot h}$ then $\Vdash x \not\sqsubset \dot{h}$. Put $\Psi_-(\dot h)=\bar y_{\dot h}$ and $\Psi_+(x)=x$ for any $x\in X$. We check \autoref{def:tukername}(2.1). Let $h\in \Vbf^{\Por}_{Y}$ and let  $x\in X^\Vbf$. Assume $\bar y_{\dot h}\sqsubset^\perp_\omega x$. Then $\neg(x \sqsubset_\omega \bar y_{\dot h})$, that is, $x\not\sqsubset y_n$ for all $n<\omega$. Hence, $\Vdash x \not\sqsubset \dot{h}$. 

For the converse, suppose $\Rbf^{\perp}\preceq_{\Vbf^\Por,\Vbf}^{\Vbf^\Por,\Vbf}\Rbf^{\perp}_\omega$ when $\Psi_+$ is the identity map. We want to see that \autoref{def:goodness} holds. To this end, let $\dot{h}$ be a $\Por$-name for a member of $Y$. According to \autoref{def:tukername} choose  $\Psi_-:\Vbf^{\Por}_{Y}\to [Y]^\omega$. So, put $H:=\ran(\Psi_-(\dot h))$. It is not hard to see that $H$ works.
\qedhere{\textrm{\ (\autoref{lem:gnltukeyposet}})}
\end{proof}

\begin{remark}\label{remark}
The maps $\Psi_-:\Vbf^{\Por}_{Y}\to [Y]^\omega$ and $\Psi_+:X\to \Vbf^{\Por}_{X}$ from \autoref{lem:gnltukeyposet} where $\Psi_+$ is the identity  are definable and absolute.
\end{remark}

The following is a consequence of \autoref{lem:gnltukeyposet} and \autoref{remark}.

\begin{lemma}\label{lem:tukeyposet}
$\Mbf\models\Por$ is $\Rbf$-good  iff  $\Rbf^{\perp}\preceq_{\Mbf^\Por,\Mbf}^{\Mbf^\Por,\Mbf}\Rbf^{\perp}_\omega$ when $\Psi_+$ is the identity map.
\end{lemma}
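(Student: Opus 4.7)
The plan is to obtain this lemma as the relativization of Lemma~\ref{lem:gnltukeyposet} to the model $\Mbf$. Since Lemma~\ref{lem:gnltukeyposet} is a theorem of ZFC, it holds inside $\Mbf$; the remaining task is to verify that, with the help of Remark~\ref{remark}, the internal (inside $\Mbf$) and external formulations of the Tukey reduction coincide, and similarly for the predicate ``$\Por$ is $\Rbf$-good''.

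For the forward direction, I would assume $\Mbf\models\Por$ is $\Rbf$-good and apply Lemma~\ref{lem:gnltukeyposet} inside $\Mbf$. This produces maps $\Psi_-\colon\Mbf^\Por_Y\to([Y]^\omega)^\Mbf$ and $\Psi_+=\id$ with the property that, from the viewpoint of $\Mbf$, for every $\dot y\in\Mbf^\Por_Y$ and every $x\in X^\Mbf$, if $\Psi_-(\dot y)\sqsubset^\perp_\omega x$ then $\Vdash x\not\sqsubset\dot y$. Because $\Por$ is Suslin ccc and $\sqsubset$ is analytic, both the class $\Mbf^\Por_Y$ and the forcing relation ``$p\Vdash x\not\sqsubset\dot y$'' are absolute between $\Mbf$ and $\Vbf$, so the same property holds externally. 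This is precisely what is required by Definition~\ref{def:tukername}(2) to witness $\Rbf^\perp\preceq_{\Mbf^\Por,\Mbf}^{\Mbf^\Por,\Mbf}\Rbf^\perp_\omega$, with clause (2.2) being automatic since $\Nbf=\Mbf$.

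For the converse, suppose $\Rbf^\perp\preceq_{\Mbf^\Por,\Mbf}^{\Mbf^\Por,\Mbf}\Rbf^\perp_\omega$ is witnessed by some pair of maps with $\Psi_+=\id$. By Remark~\ref{remark}, the witnessing $\Psi_-$ from the proof of Lemma~\ref{lem:gnltukeyposet} can be taken to be the canonical definable and absolute assignment $\dot h\mapsto\bar y_{\dot h}$, so it belongs to $\Mbf$ and its witnessing property transfers inside $\Mbf$ (again by absoluteness of the forcing relation). A second application of Lemma~\ref{lem:gnltukeyposet}, this time internally to $\Mbf$, then yields $\Mbf\models\Por$ is $\Rbf$-good.

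The main obstacle is to justify the absoluteness, between $\Mbf$ and $\Vbf$, of the forcing relation ``$p\Vdash\varphi$'' for $\varphi$ of the form $x\not\sqsubset\dot y$, and of the class $\Mbf^\Por_Y$ of $\Por$-names for members of $Y$. Because $\Por$ is Suslin ccc and $\sqsubset$ is analytic, both are standard consequences of Shoenfield-style absoluteness for $\Sigma^1_2$ statements and of the ccc-absoluteness of the forcing relation for Suslin forcings; but the step should be made explicit to ensure the internal witnesses supplied by Lemma~\ref{lem:gnltukeyposet} really do fulfil the conditions of Definition~\ref{def:tukername}(2) in the external universe.
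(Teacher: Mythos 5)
Your proposal follows the paper's own route: the paper gives no separate argument for this lemma, obtaining it exactly as you do by relativizing \autoref{lem:gnltukeyposet} to $\Mbf$ and invoking the definability and absoluteness of the maps recorded in \autoref{remark} (together with the absoluteness of the forcing relation for the Suslin ccc poset and the analytic relation, which you make explicit). One small point: in the converse you need not replace the given $\Psi_-$ by the ``canonical'' definable assignment $\dot h\mapsto\bar y_{\dot h}$ (which is built from goodness, so that phrasing is slightly circular); since $\Psi_-(\dot h)\in([Y]^\omega)^{\Mbf}$ for each $\dot h\in\Mbf^{\Por}_{Y}$, the set $H=\ran(\Psi_-(\dot h))$ already lies in $\Mbf$, and its witnessing property transfers into $\Mbf$ by the same absoluteness, which is all that goodness in $\Mbf$ requires.
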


\autoref{lem:tukeyposet} can be generalized to two models.

\begin{lemma}\label{lem:twogoodeness}
Let $\Por$ be a Suslin ccc notion. Asumme that $\Mbf\models$ ``$\Por$ is $\Rbf$-good" and $\Nbf\models$ ``$\Por$ is $\Rbf$-good". Then $\Rbf^{\perp}\preceq_{\Nbf^\Por,\Nbf}^{\Mbf^\Por,\Mbf}\Rbf^{\perp}_\omega$.
\end{lemma}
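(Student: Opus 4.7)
The plan is to lift the one-model statement \autoref{lem:tukeyposet} to two models by invoking the absoluteness of the witnessing maps recorded in \autoref{remark}. Concretely, I extract the maps $\Psi_-$ and $\Psi_+$ from goodness in $\Mbf$, and then use goodness in $\Nbf$ only to guarantee, via absoluteness, that the outputs restrict properly to the $\Nbf$-layer.

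First, applying \autoref{lem:tukeyposet} inside $\Mbf$ produces maps $\Psi_-:\Mbf^{\Por}_Y\to ([Y]^\omega)^{\Mbf}$ and $\Psi_+:X^{\Mbf}\to \Mbf^{\Por}_X$ with $\Psi_+$ the ``check'' map $x\mapsto \check x$, satisfying clause (2.1) of \autoref{def:tukername}: for every $\dot h\in\Mbf^{\Por}_Y$ and $x\in X^{\Mbf}$, if $x$ is $\Rbf$-unbounded over $\Psi_-(\dot h)$ then $\Vdash x\not\sqsubset \dot h$. Thus only the restriction clause (2.2), namely $\Psi_-[\Nbf^{\Por}_Y]\subseteq ([Y]^\omega)^{\Nbf}$ and $\Psi_+[X^{\Nbf}]\subseteq \Nbf^{\Por}_X$, remains to be verified.

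The second inclusion in (2.2) is immediate since $x\in\Nbf$ implies $\check x\in\Nbf$. For the first, fix $\dot h\in\Nbf^{\Por}_Y$. Because $\Nbf\models$ ``$\Por$ is $\Rbf$-good'', applying \autoref{lem:gnltukeyposet} inside $\Nbf$ produces a countable set $\bar y^{\Nbf}_{\dot h}\in ([Y]^\omega)^{\Nbf}$ witnessing goodness for $\dot h$. By \autoref{remark} the map $\Psi_-$ is given by an absolute formula, so the $\Mbf$-computation $\Psi_-(\dot h)$ agrees with the $\Nbf$-computation $\bar y^{\Nbf}_{\dot h}$, placing $\Psi_-(\dot h)$ in $\Nbf$ as required.

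The only delicate point is ensuring that the choice of witness in the proof of \autoref{lem:gnltukeyposet} is genuinely absolute between $\Nbf$ and $\Mbf$ rather than depending on parameters internal to the outer model; this is precisely the content of \autoref{remark}. Without such absoluteness, goodness inside $\Nbf$ would only produce \emph{some} witness in $\Nbf$, not necessarily the one manufactured in $\Mbf$, and clause (2.2) could fail. With absoluteness in hand, the two-model statement is a direct synthesis of the one-model construction in $\Mbf$ and the goodness hypothesis in $\Nbf$.
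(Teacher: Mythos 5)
Your division of labour---clause (2.1) of \autoref{def:tukername} from goodness in $\Mbf$, clause (2.2) from goodness in $\Nbf$---is the right instinct, but the step carrying all the weight is not justified: you claim that the map $\Psi_-$ extracted from goodness \emph{inside $\Mbf$} agrees, on names lying in $\Nbf$, with the map extracted from goodness \emph{inside $\Nbf$}. Nothing in \autoref{lem:gnltukeyposet} or \autoref{remark} yields this. The set $\bar y_{\dot h}$ in \autoref{lem:gnltukeyposet} is merely \emph{chosen} among the many countable sets witnessing goodness for $\dot h$, and even granting the Remark's assertion that such maps can be taken definable and absolute, what the argument actually needs (and what the paper in fact uses) is absoluteness of the \emph{witnessing property} of a given countable set, not coincidence of two choice functions computed in two different models with possibly different parameters (antichains, enumerations of the forcing, etc.). As written, clause (2.2) is not verified, and your whole proof reduces to an unproved identity between the two computations.

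The paper's proof sidesteps this by defining $\Psi_-$ piecewise: on names $\dot y\in\Nbf^{\Por}_{Y}$ it uses the map $\Psi_-^0$ given by \autoref{lem:tukeyposet} applied in $\Nbf$ (so membership of the value in $\Nbf$, i.e.\ (2.2), is automatic), and on all other names it uses the map given by goodness in $\Mbf$. The genuine content is then the verification of (2.1) on the $\Nbf$-branch: goodness in $\Nbf$ only guarantees that every $x\in X^{\Nbf}$ which is $\Rbf$-unbounded over $\Psi_-^0(\dot y)$ satisfies $\Vdash x\not\sqsubset\dot y$, whereas (2.1) demands this for every $x\in X^{\Mbf}$. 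The paper obtains this upgrade by noting that the statement ``for all $x\in X$, if $x$ is $\Rbf$-unbounded over $\Psi_-^0(\dot y)$ then $\Vdash x\not\sqsubset\dot y$'' is $\Pi^1_1$ in codes for $\Psi_-^0(\dot y)$ and $\dot y$ (Suslin-ness of $\Por$ enters here), hence absolute between the transitive models $\Nbf$ and $\Mbf$. Your proposal never performs this transfer: if the two computations literally agreed it would be automatic, but that agreement is exactly what you have not established. So there is a genuine gap; the repair is to adopt the piecewise definition and run the $\Pi^1_1$-absoluteness argument for the $\Nbf$-defined witnesses.
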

\begin{proof}
Since $\Nbf\models$ $\Por$ is $\Rbf$-good, we can find $\Psi_-^0:\Nbf^{\Por^\Nbf}_{Y}\to(Y^\omega)^\Nbf$ and $\Psi_-^1:\Nbf^{\Por^\Mbf}_{Y}\to(Y^\omega)^\Mbf$ by \autoref{lem:tukeyposet}. To define  $\Psi_-:\Mbf^{\Por^\Mbf}_{Y}\to(Y^\omega)^\Mbf$ it suffices to note the following: 
If $\dot y\in\Nbf^{\Por^\Nbf}_{Y}$ then 
\[\Nbf\models\underbrace{\forall x\in X(\Psi_-^0(\dot y)\not\sqsubset_\omega x\Rightarrow\Vdash \dot y\not\sqsubset x)}_{\Pi_1^1}\]
By absolutness,
\[\Mbf\models\underbrace{\forall x\in X(\Psi_-^0(\dot y)\not\sqsubset_\omega x\Rightarrow\Vdash \dot y\not\sqsubset x)}_{\Pi_1^1}\]
So we set $\Psi_-:\Mbf^{\Por^\Mbf}_{Y}\to(Y^\omega)^\Mbf$ by 
\[\Psi_-(\dot y):=\left\{\begin{array}{ll}
  \Psi_-^0(\dot y)   &   \text{if $\dot y\in\Nbf^{\Por^\Nbf}_{Y}$,}\\
\Psi_-^1(\dot y) & \text{otherwise.}
\end{array}\right.\]
\qedhere{\textrm{\ (\autoref{lem:twogoodeness}})}
\end{proof}

\begin{remark}\label{remarktukey}
\begin{itemize}
    \item[(i)] The Polish relational systems $\Dbf$ and $\Lc^*$ fulfill $\Dbf\eqT\Dbf_\omega$ and $\Lc^*\eqT(\Lc^*)_\omega$, respectively.
    \item[(ii)] If $\Rbf$ is a Polish relational system then $\Rbf_\omega$ is one as well. It is not hard to see that $\Rbf_\omega\eqT(\Rbf_\omega)_\omega$.
\end{itemize}
\end{remark}

\autoref{lem:twogoodeness} and \autoref{remarktukey} gives us:

\begin{corollary}\label{cohend}
$\Rbf^{\perp}\preceq_{\Nbf^\Cor,\Nbf}^{\Mbf^\Cor,\Mbf}\Rbf^{\perp}_\omega$ holds 
for any Polish relational system $\Rbf$, in particular for $\Dbf$, and $\Lc^*$. As a consequence,
\begin{itemize}
    \item[(i)] $\dfrak_{\Nbf[c]}^{\Mbf[c]}\Rightarrow\dfrak_\Nbf^\Mbf$ and $\bfrak_\Nbf^\Mbf\Rightarrow\bfrak_{\Nbf[c]}^{\Mbf[c]}$.
    \item[(ii)]  $\cof(\Nwf)_{\Nbf[c]}^{\Mbf[c]}\Rightarrow\cof(\Nwf)_\Nbf^\Mbf$ and $\add(\Nwf)_\Nbf^\Mbf\Rightarrow\add(\Nwf)_{\Nbf[c]}^{\Mbf[c]}$.
\end{itemize}
Moreover, for $\Rbf\in\{\Cf,\Ed,\Sbf\}$, we get 
\begin{itemize}
 \item[(iii)]  $\non(\Mwf)_{\Nbf}^{\Mbf}\Rightarrow\non(\Mwf)_{\Nbf[c]}^{\Mbf[c]}$ and $\cov(\Mwf)_{\Nbf[c]}^{\Mbf[c]}\Rightarrow\cov(\Mwf)_{\Nbf}^{\Mbf}$.
    \item[(iv)]  $\rfrak_{\Nbf[c]}^{\Mbf[c]}\Rightarrow\rfrak_{\Nbf}^{\Mbf}$ and $(\sfrak_{\omega})_{\Nbf}^{\Mbf}\Rightarrow(\sfrak_{\omega})_{\Nbf[c]}^{\Mbf[c]}$.
     
     \item[(v)] $\cof(\Mwf)_{\Nbf[c]}^{\Mbf[c]}\Rightarrow\cof(\Mwf)_{\Nbf}^{\Mbf}$ and $\add(\Mwf)_{\Nbf}^{\Mbf}\Rightarrow\add(\Mwf)_{\Nbf[c]}^{\Mbf[c]}$.   
\end{itemize}
\end{corollary}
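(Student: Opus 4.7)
The main inequality $\Rbf^{\perp}\preceq_{\Nbf^\Cor,\Nbf}^{\Mbf^\Cor,\Mbf}\Rbf^{\perp}_\omega$ is a direct instance of \autoref{lem:twogoodeness} with $\Por=\Cor$, so my plan is to reduce the whole statement to ``$\Cor$ is $\Rbf$-good'' and verify it in both $\Nbf$ and $\Mbf$. Since $\Rbf$-goodness depends only on the real codes for $\Rbf$, the sentence ``$\Cor$ is $\Rbf$-good'' is an absolute ZFC-statement; hence it suffices to prove it in $\Vbf$ and let both relativizations transfer automatically.

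\textbf{Core step.} For a Polish relational system $\Rbf=\la X,Y,\bigcup_n\sqsubset_n\ra$, I would prove that $\Cor$ is $\Rbf$-good by exploiting the countability of $\Cor$. Given a $\Cor$-name $\dot h$ for a member of $Y$, enumerate $\Cor=\{p_k:k<\omega\}$ and, for each $k$, choose a maximal antichain below $p_k$ of conditions that decide an arbitrarily fine basic open neighborhood of $\dot h$ in a fixed countable base of $Y$. Picking one representative in $Y$ per condition produces a countable set $H\subseteq Y$. The closedness of $\sqsubset_n$ and the closed nowhere dense structure of its sections then imply that, for any $x\in X$ with $x\not\sqsubset y$ for all $y\in H$, no condition of $\Cor$ can force $x\sqsubset\dot h$; this is the classical Cohen goodness argument.

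\textbf{Deriving the consequences.} Once the main inequality is available, use the transport lemma immediately preceding \autoref{def:goodness} to pass to $\Rbf^\perp\preceq_{\Nbf[c],\Nbf}^{\Mbf[c],\Mbf}\Rbf^\perp_\omega$, and then apply \autoref{lem:tukey}:
\[
\dfrak(\Rbf^\perp_\omega)_\Nbf^\Mbf\Rightarrow\dfrak(\Rbf^\perp)_{\Nbf[c]}^{\Mbf[c]},\qquad \bfrak(\Rbf^\perp)_{\Nbf[c]}^{\Mbf[c]}\Rightarrow\bfrak(\Rbf^\perp_\omega)_\Nbf^\Mbf.
\]
For (i), take $\Rbf=\Dbf$ and use $\Dbf\eqT\Dbf_\omega$ from \autoref{remarktukey}(i) to identify $\bfrak(\Dbf^\perp_\omega)=\dfrak$ and $\dfrak(\Dbf^\perp_\omega)=\bfrak$. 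For (ii), take $\Rbf=\Lc^*$, use $\Lc^*\eqT(\Lc^*)_\omega$, and appeal to $\Lc^*\eqT\Nwf$ from \autoref{examplebasic}(iv). Items (iii), (iv), and (v) specialize the same scheme to $\Rbf=\Ed$, $\Sbf$, and $\Cf$ respectively, identifying the invariants of $\Rbf^\perp$ and $\Rbf^\perp_\omega$ via \autoref{examplebasic} with $\non(\Mwf)/\cov(\Mwf)$, $\sfrak/\rfrak$ (with the $\omega$-version producing the $\sfrak_\omega$ appearing in (iv)), and $\add(\Mwf)/\cof(\Mwf)$ through $\Cf\eqT\Mwf$.

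\textbf{Main obstacle.} The only genuinely delicate point is extending the goodness argument to $\Cf$, whose relation $\sqsubset^\Mwf$ is $\boldsymbol{\Pi}^1_1$ rather than a countable union of closed sets with closed nowhere dense sections. My plan is to replace $\Cf$ by a Polish relational system Tukey-equivalent to $\Mwf$, prove goodness there, and transfer the resulting Tukey-with-models inequality back to $\Cf$ via Tukey-equivalence --- this is the standard route used in the classical proof that $\Cor$ preserves $\cov(\Mwf)$ and $\non(\Mwf)$.
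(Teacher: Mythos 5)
Your proposal follows the paper's route: the paper's proof of \autoref{cohend} is a one-line appeal to \autoref{lem:twogoodeness} together with the known fact (cited to \cite{BJ}) that $\Cor$ is $\Rbf$-good for $\Rbf\in\{\Dbf,\Lc^*,\Sbf,\Ed,\Cf\}$, followed by exactly the identifications you describe via the transport lemma, \autoref{lem:tukey}, \autoref{remarktukey} and \autoref{examplebasic}. The only divergence is in how the goodness facts are supplied: you prove Cohen goodness generically for Polish relational systems (correct, and nowhere density of the sections is not even needed there) and reroute the non-Polish system $\Cf$ through a Tukey-equivalent Polish system for $\Mwf$ with absolutely definable maps, whereas the paper simply cites $\Cf$-goodness of $\Cor$ directly --- a legitimate, slightly more self-contained way of filling in what the paper outsources to \cite{BJ}.
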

\begin{proof}
It follows because $\Cor$ is $\Rbf$-good for $\Rbf\in\{\Dbf,\Lc^*,\Sbf,\Ed,\Cf\}$. For more details to see e.g. \cite{BJ}.
\qedhere{\textrm{\ (\autoref{cohend}})}
\end{proof}


The next result concerns random forcing.

\begin{lemma}\label{radomcov} 
\begin{itemize}
    \item[(a)] $(\Lc^*)^\perp\preceq_{\Nbf^{\Bor},\Nbf}^{\Mbf^{\Bor},\Mbf}(\Lc^*)^\perp$.
    \item[(b)] $\Dbf^\perp\preceq^{\Mbf^{\Bor},\Mbf}_{\Nbf^{\Bor},\Nbf}\Dbf^\perp$.
\end{itemize}

As a consequence we get 
\begin{itemize}
    \item[(i)] $\add(\Nwf)_{\Nbf}^{\Mbf}\Rightarrow\add(\Nwf)_{\Nbf[r]}^{\Mbf[r]}$ and $\cof(\Nwf)_{\Nbf[r]}^{\Mbf[r]}\Rightarrow\cof(\Nwf)_{\Nbf}^{\Mbf}$.
    \item[(ii)] $\dfrak_{\Nbf[r]}^{\Mbf[r]}\Rightarrow\dfrak_\Nbf^\Mbf$ and $\bfrak_\Nbf^\Mbf\Rightarrow\bfrak_{\Nbf[r]}^{\Mbf[r]}$.
\end{itemize}
\end{lemma}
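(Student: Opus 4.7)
The plan is to derive (a) and (b) from the classical fact that random forcing is good (in the sense of Definition \ref{def:goodness}) for both $\Dbf$ and $\Lc^*$, and then invoke Lemma \ref{lem:twogoodeness}. The $\Dbf$-goodness of $\Bor$ is just the standard $\omega^\omega$-bounding property of random forcing, while the $\Lc^*$-goodness expresses the well-known preservation of $\add(\Nwf)$ and $\cof(\Nwf)$ by random reals (see e.g.~\cite[Sec.~6.5]{BJ}). Both statements are theorems of ZFC, so both $\Nbf$ and $\Mbf$ verify them, placing us in position to apply Lemma \ref{lem:twogoodeness}.

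Applying that lemma with $\Por = \Bor$ and $\Rbf \in \{\Dbf, \Lc^*\}$ yields
\[
\Rbf^\perp \preceq_{\Nbf^\Bor,\Nbf}^{\Mbf^\Bor,\Mbf} \Rbf^\perp_\omega.
\]
To remove the $\omega$-decoration I would appeal to Remark \ref{remarktukey}(i), which states $\Dbf \eqT \Dbf_\omega$ and $\Lc^* \eqT \Lc^*_\omega$. The maps witnessing these Tukey equivalences (coordinate-wise maximum of finitely many functions for $\Dbf$, and a diagonal union for $\Lc^*$) are definable and absolute, so they upgrade to Tukey-with-models equivalences $\Rbf^\perp \cong_{\Nbf,\Nbf}^{\Mbf,\Mbf} \Rbf^\perp_\omega$. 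Composing yields (a) and (b).

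For the consequences, I would first pass from the "names" version to the "generic extension" version via the unlabeled lemma following Definition \ref{def:tukername}, obtaining $\Rbf^\perp \preceq_{\Nbf[r],\Nbf}^{\Mbf[r],\Mbf} \Rbf^\perp$ for $\Rbf \in \{\Dbf, \Lc^*\}$. Then Lemma \ref{lem:tukey} directly produces the stated implications: (ii) uses the identities $\dfrak(\Dbf^\perp) = \bfrak$ and $\bfrak(\Dbf^\perp) = \dfrak$, while (i) additionally invokes the Bartoszy\'nski characterization $\Nwf \eqT \Lc^*$, which (because the Borel coding maps involved are absolute) promotes to a Tukey-with-models equivalence, together with $\bfrak(\Lc^*) = \add(\Nwf)$ and $\dfrak(\Lc^*) = \cof(\Nwf)$.

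The main obstacle is the verification of $\Lc^*$-goodness of random forcing. The classical construction, for a $\Bor$-name $\dot\varphi$ with $\Vdash |\dot\varphi(n)| \leq h(n)$ for some $h \in \Hcal$, produces a countable family of ground-model slaloms $\{\psi_\ell : \ell < \omega\}$ where $\psi_\ell(n)$ collects those $m$ for which the measure of the Boolean value $[\![ m \in \dot\varphi(n)]\!]$ exceeds a threshold depending on $\ell$. A Borel--Cantelli-style estimate then shows that any $f$ simultaneously anti-localized by every $\psi_\ell$ is forced not to be localized by $\dot\varphi$. Once this classical fact is granted, the rest of the proof is a mechanical assembly of results already developed in the paper.
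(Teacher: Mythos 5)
Your proposal is correct and takes essentially the same route as the paper: the paper's proof of \autoref{radomcov} consists precisely of the remark that $\Bor$ is $\Dbf$-good and $\Lc^*$-good (citing \cite{BJ}), to be combined with \autoref{lem:twogoodeness}, \autoref{remarktukey} and \autoref{lem:tukey}, which is exactly the assembly you spell out (including the absoluteness of the maps witnessing $\Rbf\eqT\Rbf_\omega$ and $\Nwf\eqT\Lc^*$, and the passage from names to the generic extension). Only a cosmetic point: in your sketch of $\Lc^*$-goodness the relevant hypothesis on $f$ is that it is $\Lc^*$-unbounded over the $\psi_\ell$'s (not localized by any of them), not that it is anti-localized in the sense of $\not\in^\infty$, and the dominating map for $\Dbf_\omega$ is a diagonal maximum over the countable family rather than over finitely many functions; neither affects the argument since you defer to the classical facts, just as the paper does.
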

\begin{proof}
It follows because $\Bor$ is $\Lc^*$-good and $\Dbf$-good. For more details to see e.g. \cite{BJ}.
\qedhere{\textrm{\ (\autoref{radomcov}})}
\end{proof}

\section{The effect on \texorpdfstring{$\dfrak(\Rbf)_{\Nbf}^{\Mbf}$}{} and \texorpdfstring{$\bfrak(\Rbf)_{\Nbf}^{\Mbf}$}{} after of adding  one Cohen real (random real) without goodness}\label{dbcohenrandom}

For this section assume that $\Nbf\subseteq \Mbf$ are models of ZFC. From now on assume that $c$ and $r$ are a Cohen real over $\Mbf$ and a random real over $\Mbf$, respectively. 

For an increasing function $f\in\omega^\omega$ and a function $x\in2^\omega$ define $x_f\in2^\omega$ as $x_f(n):=x(f(n))$ for $n\in\omega$.

\begin{lemma}\label{cohencd}
$\Dbf\preceq_{\Nbf,\Nbf^\Cor}^{\Mbf,\Mbf^\Cor}\Cbf_\Mwf$. In particular, $\cov(\Mwf)_{\Nbf[c]}^{\Mbf[c]}\Rightarrow\dfrak_\Nbf^\Mbf$ and $\bfrak_\Nbf^\Mbf\Rightarrow\non(\Mwf)_{\Nbf[c]}^{\Mbf[c]}$.
\end{lemma}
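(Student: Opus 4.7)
I would construct the two maps required by \autoref{def:tukername}(1) by exploiting the combinatorial structure of Cohen forcing. Identify $\Cor = 2^{<\omega}$ with its generic $c \in 2^\omega$; by replacing $h$ with $n \mapsto h(n)+n$ if necessary, assume WLOG every $h \in (\omega^\omega)^\Mbf$ is strictly increasing. Define $\Psi_-(h)$ as the canonical $\Cor$-name for $\dot c_h \in 2^\omega$ given by $\dot c_h(n) := c(h(n))$. Since this is absolutely defined in $h$, the inclusion $\Psi_-[(\omega^\omega)^\Nbf] \subseteq \Nbf^\Cor_{2^\omega}$ is immediate. For $\Psi_+$, invoke Bartoszy\'nski's characterization of meager sets: every Borel meager set in $2^\omega$ is contained in
\[
  M_{k,y} := \bigl\{x \in 2^\omega : \forall^\infty n,\ x\!\frestr\![k(n), k(n+1)) \neq y\!\frestr\![k(n), k(n+1))\bigr\}
\]
for some strictly increasing $k \in \omega^\omega$ and $y \in 2^\omega$, so WLOG $\dot M = M_{\dot k, \dot y}$. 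Let $g_{\dot M}(n)$ be the maximum length of any minimal extension $p' \supseteq p$, with $p \in 2^{\leq n}$, that decides both $\dot k(n+1)$ and $\dot y\!\frestr\![\dot k(n)[p'], \dot k(n+1)[p'])$ — a finite maximum computed absolutely from the name $\dot M$, so $\Psi_+[\Nbf^\Cor_\Mwf] \subseteq (\omega^\omega)^\Nbf$.

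The key step is to verify $\Vdash(\dot c_h \in \dot M \to h \leq^* g_{\dot M})$, equivalently that $h \not\leq^* g_{\dot M}$ implies $\Vdash \dot c_h \notin \dot M$. Given $p_0 \in \Cor$ and $N < \omega$, fix $n \geq \max(N,|p_0|)$ with $h(n) > g_{\dot M}(n)$ (possible by hypothesis), take $p := p_0 \in 2^{\leq n}$, and extend to a minimal $p' \supseteq p$ deciding $\dot k(n+1) = k$ and $\dot y\!\frestr\![\dot k(n)[p'], k) = y'$. Since $\dot k$ is forced strictly increasing, $\dot k(n)[p'] \geq n$, and monotonicity of $h$ gives $h(j) \geq h(n) > g_{\dot M}(n) \geq |p'|$ for each $j \in [\dot k(n)[p'], k)$, so $h(j) \notin \dom(p')$. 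Extending $p'$ to $q$ by stipulating $c(h(j)) := y'(j)$ on those indices yields
\[
  q \Vdash \dot c_h\!\frestr\![\dot k(n), \dot k(n+1)) = \dot y\!\frestr\![\dot k(n), \dot k(n+1)),
\]
that is, agreement at the $n$-th block for some $n \geq N$. Since $N$ and $p_0$ were arbitrary, the set of conditions forcing agreement at arbitrarily late blocks is dense, and the generic $c$ therefore witnesses agreement at infinitely many blocks, contradicting membership in $M_{\dot k, \dot y}$; hence $\Vdash \dot c_h \notin \dot M$, as needed.

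The main obstacle is the index bookkeeping inside the density step: controlling $|\dom(p')|$ by $g_{\dot M}(n)$ after the additional decisions about $\dot y$, and ensuring the strict monotonicity of $\dot k$ puts $\dot k(n)[p']$ at least at $n$ so that the monotonicity of $h$ transfers cleanly to the block indices. Absoluteness of the Bartoszy\'nski decomposition across models of ZFC handles the $\Nbf$-preservation automatically. Once the combinatorial core is in place, the consequences $\cov(\Mwf)^{\Mbf[c]}_{\Nbf[c]} \Rightarrow \dfrak^\Mbf_\Nbf$ and $\bfrak^\Mbf_\Nbf \Rightarrow \non(\Mwf)^{\Mbf[c]}_{\Nbf[c]}$ follow from the forcing-extension form of \autoref{lem:tukey}.
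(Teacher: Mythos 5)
Your overall strategy (the name $\dot c_h$ on the $X$-side, a dominating function read off from the name of the meager set on the $Y$-side, and a density argument hitting a late block) is the same as the paper's, which works with an increasing decomposition $\dot A=\bigcup_n\dot A_n$ into nowhere dense sets and, for each condition $p_m$ and each $n$, a \emph{single chosen} extension $q_{m,n}\leq p_m$ together with a string $\sigma_{m,n}$ such that $q_{m,n}\Vdash[\vartheta^\frown\sigma_{m,n}]\cap\dot A_n=\emptyset$ for all $\vartheta\in2^n$; it then sets $g_m(n):=|q_{m,n}|$ and lets $\Psi_+(\dot A)$ dominate the countably many $g_m$. Your use of the Bartoszy\'nski form $M_{\dot k,\dot y}$ instead of the nowhere dense pieces is a legitimate variant and does not affect the structure of the argument.

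However, there is a genuine gap in your definition of $\Psi_+$: you define $g_{\dot M}(n)$ as ``the maximum length of any minimal extension $p'\supseteq p$, $p\in2^{\leq n}$, that decides $\dot k(n+1)$ and $\dot y$ on the corresponding block'' and assert this is a finite maximum. It is not, in general. For a fixed $p$, the minimal extensions of $p$ deciding an integer-valued name form a maximal antichain (a front) below $p$ in the finitely branching tree $2^{<\omega}$, and such a front has bounded lengths only when it is finite; for a typical name (e.g.\ $\dot k(n+1)=$ ``the least $j$ with $c(j)=1$'') the front is infinite and the lengths are unbounded, so your ``maximum'' does not exist. The repair is exactly the device the paper uses: for each $p\in2^{\leq n}$ \emph{choose one} extension $p'_{p,n}$ deciding $\dot k(n)$, $\dot k(n+1)$ and $\dot y$ on the block (note you also need $\dot k(n)$ decided, which you use implicitly), and set $g_{\dot M}(n):=\max_{p\in2^{\leq n}}|p'_{p,n}|$, a maximum over finitely many conditions; in the density step you must then extend the given $p_0$ to \emph{its} chosen extension $p'_{p_0,n}$ rather than to an arbitrary minimal one, which your requirement $n\geq|p_0|$ already permits. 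With this change the rest of your verification (the estimate $h(j)\geq h(n)>g_{\dot M}(n)\geq|p'|$, filling in $c(h(j)):=y'(j)$, and genericity giving infinitely many agreeing blocks, hence $\Vdash\dot c_h\notin\dot M$) is correct, and the two stated consequences follow as you say from the generic-extension form of \autoref{lem:tukey}; as in the paper, one should also note that the choices above are made canonically so that a name in $\Nbf$ yields $\Psi_+(\dot M)\in\omega^\omega\cap\Nbf$, which is what \autoref{def:tukername}(1.2) requires.
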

\begin{proof}
Let's assume that $\Cor=2^{<\omega}. $Let $\dot{A}$ be a $\Cor$-name for a meager set in $2^\omega$. Find a sequence of $\Cor$-names $\la\dot{A}_n\ra_{n<\omega}$ such that $\Vdash \dot{A}=\bigcup_{n<\omega}\dot{A}_n$ and $\Vdash \dot{A}_n$ is nowhere dense for each $n\in\omega$, and $\la \dot A_n\ra_{n<\omega}$ is increasing.
Since $\Cor$ is countable, $\Cor=\{p_m:m<\omega\}$. For $m,n<\omega$ we can find $q_{m,n}\leq p_m$ and $\sigma_{m,n}\in2^{<\omega}$ such that  $\forall \vartheta\in2^{n}(q_{m,n}\Vdash[\vartheta{}^{\smallfrown}\sigma_{m,n}]\cap\dot{A}_n=\emptyset)$.
Define $g_m\in\omega^\omega$ by $g_m(n):=|q_{m,n}|$. Let $\Psi_+(\dot{A})$ be a function in $\omega^\omega$ which dominates all $g_m$ and let $\Psi_-(f):=\dot c_f$ for $f\in\omega^\omega$. 

It remains to check that, $f\not\leq^* \Psi_+(\dot{A})$ implies $\Vdash c_f\notin \dot{A}$. To see this assume $f\not\leq^* \Psi_+(\dot{A})$. To guarante that  $\Vdash c_f\notin \dot{A}$ it sufficies to prove that, given $p\in\Cor$ and $i<\omega$ there is some $q\leq p$ such that $q\Vdash \dot{c}_f\notin \dot{A}_i$. Let $p\in\Cor$ and $i<\omega$. Choose $m$ such that $p=p_m$ and choose $n\geq i$ such that $f(n)>\Psi_+(A)(n)\geq g_m(n)$.  Wlog assume that $f$ is stricly increasing, so $f(k)\geq f(n)>|q_{m,n}|$ for any $k\geq n$. Find $q\leq q_{m,n}$ such that $q\Vdash \dot{c}_f{\upharpoonright} [n,n+|\sigma_{m,n}|)=\sigma_{m,n}$. Then $q\Vdash \dot{c}_f\notin\dot{A}_n$, so $q\Vdash \dot{c}_f\notin\dot{A}_i$ (because $\la \dot A_n\ra_{n<\omega}$ is increasing and $n\geq i$).
\qedhere{\,\textrm{(\autoref{cohencd}})}
\end{proof}

\begin{lemma}\label{cohencovc}
$\Cbf_\Nwf\preceq_{\Nbf^\Cor,\Nbf}^{\Mbf^\Cor,\Mbf}\Cbf_\Cwf^\perp$. In particular, $\non(\Cwf)_{\Nbf}^{\Mbf}\Rightarrow\cov(\Nwf)_{\Nbf[c]}^{\Mbf[c]}$ and $\non(\Nwf)_{\Nbf[c]}^{\Mbf[c]}\Rightarrow\cov(\Cwf)_{\Nbf}^{\Mbf}$.
\end{lemma}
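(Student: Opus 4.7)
The plan is to mirror the proof of \autoref{cohencd}, constructing the maps $\Psi_-$ and $\Psi_+$ explicitly using the countability of $\Cor=2^{<\omega}$ and verifying the Tukey condition via a density argument.

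For $\Psi_-$, enumerate $\Cor=\{p_n:n<\omega\}$. Given a $\Cor$-name $\dot x\in\Mbf^\Cor_{2^\omega}$, for each $n<\omega$ recursively pick a descending chain $q_{n,k+1}\leq q_{n,k}\leq p_n$ and strings $s_{n,k}\in 2^k$ with $s_{n,k}\sqsubseteq s_{n,k+1}$ and $q_{n,k}\Vdash\dot x\restriction k=s_{n,k}$. Set $x_n:=\bigcup_k s_{n,k}\in 2^\omega$ and $\Psi_-(\dot x):=\{x_n:n<\omega\}\in\Cwf^\Mbf$. The whole recursion is absolute to $\Mbf$ and uses only data visible to $\Nbf$ when $\dot x\in\Nbf^\Cor_{2^\omega}$, so automatically $\Psi_-(\dot x)\in\Cwf^\Nbf$ in that case.

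For $\Psi_+$, fix in $\Mbf$ an interval partition $\langle I_m:m<\omega\rangle$ of $\omega$ with $\sum_{m<\omega}2^{-|I_m|}<\infty$ (say $I_m=[2^m,2^{m+1})$). For each $y\in 2^\omega\cap\Mbf$, let $\Psi_+(y)$ be the check name $\check{N_y}$, where
\[
N_y:=\{x\in 2^\omega:\exists^\infty m\,(x\restriction I_m=y\restriction I_m)\}
\]
is the $G_\delta$ Borel set coded by $y$, null by Borel--Cantelli. Thus $\Psi_+(y)\in\Mbf^\Cor_\Nwf$, and $\Psi_+(y)\in\Nbf^\Cor_\Nwf$ whenever $y\in 2^\omega\cap\Nbf$.

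Condition (2.1) of \autoref{def:tukername} then reduces to: if $y\neq x_n$ for every $n$, then $\Vdash\dot x\in N_y$. By density in $\Cor$ this amounts, given $p_n\in\Cor$ and $m_0<\omega$, to producing $q\leq p_n$ and some $m\geq m_0$ with $q\Vdash\dot x\restriction I_m=y\restriction I_m$. The plan is: since $y\neq x_n$, pick $k_0$ with $y\restriction k_0\neq s_{n,k_0}$, then for a suitable large $m\geq m_0$ extend $q_{n,k}$ past $\max I_m$ in such a way that the decided value of $\dot x$ on $I_m$ coincides with $y\restriction I_m$, using the freedom still left in the Cohen condition past the support of $q_{n,k}$.

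The main obstacle is exactly this last density step. The naive reading $x_n$ records only a single branch through the name tree of $\dot x$ below $p_n$; along incomparable extensions $\dot x$ can be forced to very different values, so $y\neq x_n$ does not by itself license forcing a match on a given block. Closing this gap requires refining the extraction of $\Psi_-(\dot x)$ so as to enumerate sufficiently many readings (for instance by cycling through countably many maximal antichains that decide $\dot x\restriction I_m$ for every $m$) and then coordinating the block partition $\langle I_m\rangle$ with those decision levels. Once the construction is pinned down in $\Mbf$, its absoluteness yields the $\Nbf$-coding clauses of \autoref{def:tukername}(2.2) at no extra cost.
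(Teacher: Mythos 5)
Your choice of $\Psi_+$ is where the argument breaks, and the failure is structural, not just the density step you flag at the end. In \autoref{def:tukername}(2) the map $\Psi_+$ is allowed to send $y\in2^\omega\cap\Mbf$ to an arbitrary $\Cor$-\emph{name} for a null set, and this freedom is essential: restricting to check names $\check N_y$ of ground-model-coded null sets cannot satisfy (2.1). Test it on a check name $\dot x=\check z$ with $z\in2^\omega\cap\Mbf$: membership in $N_y$ is absolute, and $\{y\in2^\omega: z\in N_y\}=N_z$ is itself null by the very same Borel--Cantelli computation, so in $\Mbf$ the set of $y$ with $\Vdash\check z\notin N_y$ is co-null, in particular not covered by any countable set coded in $\Mbf$. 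Hence, whatever countable $\Psi_-(\check z)\in\Cwf^\Mbf$ you extract (for $\check z$ all readings of the name collapse to $z$ anyway), there is $y\notin\Psi_-(\check z)$ with $\not\Vdash\check z\in\Psi_+(y)$, violating (2.1). For the same reason the repair you sketch---enumerating more readings of $\dot x$ via antichains and coordinating the blocks $I_m$ with the decision levels---cannot close the gap: no refinement of $\Psi_-$ helps while the target null set is a fixed Borel set of the ground model.

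The paper's proof is built precisely so that the catching null set depends on the Cohen real. Writing $\Vdash\dot t=f(\dot c)$ for a Borel function $f$ coded in $\Mbf$, it uses the Cicho\'n--Pawlikowski Borel set $B\subseteq2^\omega\times2^\omega\times2^\omega$ of \cite[Thm. 1.1]{cicpal}, whose sections $B_{\la x,y\ra}$ are null and for which the set $\Psi_-(\dot t):=\{x:A^f_x\notin\Mwf\}$ is countable, where $A^f=\{\la x,y\ra:f(y)\notin B_{\la x,y\ra}\}$; then $\Psi_+(x):=B_{\la x,\dot c\ra}$ is a name for a null set, and if $x\notin\Psi_-(\dot t)$ then $A^f_x$ is meager, the Cohen real avoids it, and so $\Vdash\dot t=f(\dot c)\in B_{\la x,\dot c\ra}$. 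If you want to salvage your blueprint, you must likewise let $\Psi_+(y)$ be a name built jointly from $y$ and $\dot c$ (and reduce a general $\dot x$ to a Borel image $f(\dot c)$ of the Cohen real), rather than a ground-model Borel set; as written, the proposal proves a statement that is simply false for check names.
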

\begin{proof}
Work in $\Nbf$: Let $B\subseteq2^{\omega}\times2^{\omega}\times2^{\omega}$ be a Borel set such that any Borel function $f:2^\omega\to2^\omega$ fulfills 
\begin{itemize}
\item[(i)] $\{\la x,y\ra:\la x,y,f(y)\ra\notin B\}\in[2^\omega]^{\leq\aleph_0}\times\Mwf$ (Fubini product of ideals),
\item[(ii)] for any $x,y\in2^\omega$, $B_{\la x,y\ra}\in\Nwf$, and  
\item[(iii)] $\{x:A^f_x\notin\Mwf\}$ is a countable set where $A^f:=\{\la x,y\ra:f(y)\not\in B_{\la x,y\ra}\}$. 
\end{itemize}
Such a $B$ exists by \cite[Thm. 1.1]{cicpal}. Let $\dot t\in\Mbf^\Cor_{2^\omega}$. Choose a Borel function $f$ coded in $\Mbf$ such that $\Vdash\dot t=f(\dot c)$. Finally, define $\Psi_-(\dot t):=\{x:A^f_x\notin\Mwf\}$ and $\Psi_+(x):=B_{\la x,\dot c\ra}$ for $x\in2^\omega$.

To finish the proof it remain to check that $\Psi_-(\dot t)\not\ni x$ implies $\Vdash\dot t\in\Psi_+(x)$. Assume that $\Psi_-(\dot t)\not\ni x$, that is, $A^f_x$ is a meager set. Since $c$ is a Cohen real over $\Mbf$, $\Vdash\dot c\notin A^f_x$. Then $\Vdash\dot t=f(\dot c)\in B_{\la x,\dot c\ra}$.
\qedhere{\textrm{\ (\autoref{cohencovc}})}
\end{proof}

The following lemma shows the behaviour of the additivities and cofinalities after adding a single Cohen real.

\begin{lemma}\label{addcohen} 
 $\Mwf\preceq_{\Nbf^\Cor,\Nbf}^{\Mbf^\Cor,\Mbf}\Mwf$. 
As a consequence, 
 $\cof(\Mwf)_{\Nbf}^{\Mbf}\Rightarrow\cof(\Mwf)_{\Nbf[c]}^{\Mbf[c]}$ and $\add(\Mwf)_{\Nbf}^{\Mbf}\Rightarrow\add(\Mwf)_{\Nbf[c]}^{\Mbf[c]}$.
\end{lemma}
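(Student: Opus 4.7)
The plan is to mimic the construction in the proof of Lemma~\ref{cohencd}, now aggregating its ``forbidden pattern'' data into a full meager set rather than a single function, and to use the Bartoszy\'nski-style description of the meager ideal (as in parts (e) and (f) of Theorem~\ref{cicr}) to reroute a ground-model meager code back into a $\Cor$-name for a meager set via the generic $\dot c$.

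Concretely, given $\dot A\in\Mbf^\Cor_{\Mwf}$, I would first fix $\langle \dot A_n: n<\omega\rangle$ a $\Cor$-name for an increasing sequence of closed nowhere dense sets with $\Vdash \dot A=\bigcup_n \dot A_n$. Enumerating $\Cor=\{p_m:m<\omega\}$, I would pick $q_{m,n}\le p_m$ and $\sigma_{m,n}\in 2^{<\omega}$ exactly as in Lemma~\ref{cohencd}, with $\forall\vartheta\in 2^n(q_{m,n}\Vdash[\vartheta^\frown\sigma_{m,n}]\cap \dot A_n=\emptyset)$. From this combinatorial data I would define, in $\Mbf$, the ground-model meager set
\[
\Psi_-(\dot A):=\bigcap_{N<\omega}\bigcup_{n\geq N}\bigcup_{\vartheta\in 2^n}[\vartheta^\frown\sigma_{m(n),n}]^{c},
\]
(with $m(n)$ chosen by a diagonalization over $m$), equivalently a Bartoszy\'nski-style set $M_{f^\ast,I^\ast}$ in $\Mbf$ whose ``prohibited patterns'' are precisely the $\sigma_{m,n}$'s and whose interval partition comes from the lengths $|q_{m,n}|$. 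For $\Psi_+$, given $B\in\Mwf^\Mbf$, fix a Borel code for $B$ in $\Mbf$ yielding $B\subseteq M_{f_B,I_B}$ with $f_B\in 2^\omega\cap\Mbf$, $I_B\in \Ior\cap\Mbf$; then set $\Psi_+(B)$ to be a canonical $\Cor$-name for the meager set $M_{f_B\oplus \dot c,\,I_B}$ obtained by XOR-ing $f_B$ with the Cohen real along $I_B$. Since a translate of a meager set is meager, $\Vdash \Psi_+(B)\in\Mwf$.

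To verify condition~(2.1), I would argue that $\Psi_-(\dot A)\subseteq B$ forces the Bartoszy\'nski data $(f^\ast,I^\ast)$ derived from $\dot A$ to be dominated, in the appropriate $\sqsubseteq^{\Ior}$- and pattern-sense, by $(f_B,I_B)$. Then a direct computation mirroring the final paragraph of the proof of Lemma~\ref{cohencd} -- that is, showing that for any $p\in\Cor$ and any $\dot x$ with $p\Vdash\dot x\in\dot A_n$, a generic extension of $p$ matches the shifted pattern $f_B\oplus\dot c$ along $I_B$ -- forces $\dot A\subseteq \Psi_+(B)$. Absoluteness (2.2) is then handled as usual: the enumeration of $\Cor$, the choices of $(q_{m,n},\sigma_{m,n})$, and the Borel codes can all be made in a $\Sigma_1^1$-definable way, so that $\dot A\in\Nbf^\Cor$ entails $\Psi_-(\dot A)\in\Mwf^\Nbf$ and $B\in\Mwf^\Nbf$ entails $\Psi_+(B)\in\Nbf^\Cor_\Mwf$.

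The main obstacle I expect is the precise alignment of the two Bartoszy\'nski codings: the pair $(f^\ast,I^\ast)$ is extracted from the forcing-theoretic structure of $\dot A$, whereas $(f_B,I_B)$ is intrinsic to $B$, and making the set-theoretic inclusion $\Psi_-(\dot A)\subseteq B$ translate cleanly into forced pattern-domination requires careful bookkeeping. Getting the $\dot c$-twist in $\Psi_+(B)$ to exactly absorb the Cohen-generic portion of $\dot A$ (the part not lying in $2^\omega\cap\Mbf$) is precisely where the combinatorial content of the lemma sits, and is the analog, at the meager-ideal level, of the domination-by-$\Psi_+(\dot A)$ computation at the end of the proof of Lemma~\ref{cohencd}.
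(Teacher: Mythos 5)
The step that fails is your choice of $\Psi_+$. Setting $\Psi_+(B):=M_{f_B\oplus\dot c,\,I_B}$ cannot satisfy Definition~\ref{def:tukername}(2.1), no matter how $\Psi_-$ is arranged. Test it on a check-name: let $\dot A$ be the name for a ground-model canonical meager set $M_{f_0,I_0}$ (with $f_0\in 2^\omega\cap\Mbf$, $I_0\in\Ior\cap\Mbf$) re-interpreted in $\Mbf[c]$, and let $E$ be any Borel meager set coded in $\Mbf$ with $\Psi_-(\dot A)\subseteq E\subseteq M_{f_E,I_E}$. Condition (2.1) would then force $M_{f_0,I_0}\subseteq M_{f_E\oplus c,\,I_E}$, but genericity forces the opposite: by a density argument the set $S$ of those $n$ such that $f_E\oplus c$ disagrees with $f_0$ somewhere on every $I_0$-interval contained in the $n$-th $I_E$-interval is infinite; thin $S$ so that no two selected $I_E$-intervals are adjacent, and let $x\in\Mbf[c]$ copy $f_E\oplus c$ on the selected intervals and equal the bitwise complement of $f_0$ elsewhere. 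Then $x$ never agrees with $f_0$ on a whole $I_0$-interval, so $x\in M_{f_0,I_0}$, while $x$ agrees with $f_E\oplus c$ on infinitely many $I_E$-intervals, so $x\notin M_{f_E\oplus c,I_E}$. The conceptual point: twisting by the Cohen real is the device for making an object \emph{avoid} named meager sets (that is exactly what drives Lemma~\ref{cohencd}, a $\cov$/$\non$ statement), whereas here $\Psi_+(B)$ must be a name for a meager set forced to \emph{contain} $\dot A$; in particular it must absorb the ground-coded part of $\dot A$, which the $c$-twisted set is generically guaranteed to miss. Two further weak points: the bridge ``$\Psi_-(\dot A)\subseteq B$ forces pattern-domination of $(f^\ast,I^\ast)$ by $(f_B,I_B)$'' is asserted but never argued (and cannot be completed with this $\Psi_+$), and the displayed formula for $\Psi_-(\dot A)$ does not define a meager set, since the limsup of complements of the clopen sets $[\vartheta^\frown\sigma_{m(n),n}]$ is the whole space; this last slip is repairable because you clearly intend a set of the form $M_{f^\ast,I^\ast}$.

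For comparison, the paper's proof does not touch the combinatorial characterization at all. Since $\Cor$ is ccc, there is a meager Borel $C'\subseteq2^\omega\times2^\omega$ coded in $\Mbf$ with $\Vdash C'_{\dot c}=\dot C$; taking a Borel isomorphism $\varphi:2^\omega\to2^\omega\times2^\omega$ carrying $\Mwf$ onto the meager (Fubini) ideal of the plane, it sets $\Psi_-(\dot C):=\varphi^{-1}[C']$ and $\Psi_+(E):=\varphi(E)_{\dot c}$. Monotonicity of taking sections gives (2.1) at once, and $\varphi(E)_{\dot c}$ is forced to be meager because the Cohen real avoids the ground-coded meager set of points whose section is non-meager (Kuratowski--Ulam). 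If you insist on a combinatorial route, you must also confront the fact that Cohen forcing is not $\omega^\omega$-bounding, so the interval-partition part of the canonical data of $\dot A$ cannot be pulled into the ground model; this is precisely the difficulty that the planar-section argument sidesteps.
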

\begin{proof}
Let $\dot C\in\Mbf^\Cor_{\Mwf}$. Choose a $C'\subseteq2^\omega\times2^\omega$ meager coded in $\Mbf$ such that $\Vdash C'_{\dot c}=\dot C$. By Sirkoski's isomosphism theorem \cite[Thm. 32.5]{sik}), there is a Borel isomosphism $\varphi:2^\omega\to2^\omega\times2^\omega$  such that $A\in\Mwf$ iff $\varphi(A)\in\Mwf\times\Mwf$ (Fubini product of ideals). Next define $\Psi_-(\dot C):=\varphi^{-1}[C']$.

Let $E$ be a Borel set in $\Mwf\cap\Mbf$. Then $\varphi(E)$ is meager in $2^\omega\times2^\omega$, so put $\Psi_+(E):=\varphi(E)_{\dot c}$.
 
It is clear that $\Psi_-(\dot C)\subseteq E$ implies $\Vdash\dot C\subseteq\Psi_+(E)$.
\qedhere{\textrm{\ (\autoref{addcohen}})}
\end{proof}

The next lemma is the converse of \autoref{radomcov}, that is, it describes $\bfrak$ and $\dfrak$ in the extension obtained by adding a single random real.

\begin{lemma}\label{DBradom}
$\Dbf\preceq_{\Nbf^{\Bor},\Nbf}^{\Mbf^{\Bor},\Mbf}\Dbf$.
In particular,
$\dfrak_\Nbf^\Mbf\Rightarrow\dfrak_{\Nbf[r]}^{\Mbf[r]}$ and $\bfrak_{\Nbf[r]}^{\Mbf[r]}\Rightarrow\bfrak_\Nbf^\Mbf$.
\end{lemma}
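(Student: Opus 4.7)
The strategy is to exploit the classical fact that random forcing is $\omega^\omega$-bounding: every $\Bor$-name for a real in $\omega^\omega$ is forced to be eventually dominated by a ground-model real. The novelty here is only to make the domination \emph{definable} from the name, so that absoluteness sends names in $\Nbf$ to reals in $\Nbf$.

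First I would define $\Psi_-\colon \Mbf^{\Bor}_{\omega^\omega} \to \omega^\omega \cap \Mbf$ as follows. Given $\dot f \in \Mbf^{\Bor}_{\omega^\omega}$, for each $n<\omega$ and $k<\omega$ the Boolean value $b_{n,k} := \lVert \dot f(n) > k \rVert$ is (represented by) a Borel subset of $2^\omega$ coded by $\dot f$, and $\lim_{k\to\infty}\mu(b_{n,k}) = 0$ since $\Vdash \dot f(n)\in\omega$. Set
\[
\Psi_-(\dot f)(n) := \min\bigl\{k<\omega : \mu(b_{n,k}) < 2^{-n}\bigr\}.
\]
By Borel--Cantelli, $\mu\bigl(\limsup_n b_{n,\Psi_-(\dot f)(n)}\bigr)=0$, hence $\Vdash \dot f(n) \leq \Psi_-(\dot f)(n)$ for all but finitely many $n$, i.e.\ $\Vdash \dot f \leq^* \check{\Psi_-(\dot f)}$. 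Define $\Psi_+\colon \omega^\omega \cap \Mbf \to \Mbf^{\Bor}_{\omega^\omega}$ by $\Psi_+(g) := \check g$.

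Now I would verify \autoref{def:tukername}(2.1) and (2.2). For (2.1): if $\Psi_-(\dot f) \leq^* g$ in $\Mbf$, then $\Vdash \check{\Psi_-(\dot f)} \leq^* \check g$, and composing with $\Vdash \dot f \leq^* \check{\Psi_-(\dot f)}$ gives $\Vdash \dot f \leq^* \check g = \Psi_+(g)$. For (2.2): the definition of $\Psi_-(\dot f)$ uses only $\dot f$, the Borel codes of $b_{n,k}$, and the Lebesgue measure of Borel sets, all of which are absolute between transitive models of ZFC; hence if $\dot f \in \Nbf^{\Bor}_{\omega^\omega}$, then $\Psi_-(\dot f)\in\Nbf$. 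Canonical names of ground-model reals are trivially preserved, so $\Psi_+[\omega^\omega\cap\Nbf]\subseteq\Nbf^{\Bor}_{\omega^\omega}$. The consequences $\dfrak_\Nbf^\Mbf\Rightarrow\dfrak_{\Nbf[r]}^{\Mbf[r]}$ and $\bfrak_{\Nbf[r]}^{\Mbf[r]}\Rightarrow\bfrak_\Nbf^\Mbf$ then follow from the name-version analogue of \autoref{lem:tukey} applied to a $\Bor$-generic $r$ over $\Mbf$.

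The only substantive ingredient is the $\omega^\omega$-bounding property of $\Bor$, which the argument above reproves in a uniform and absolute fashion. I do not expect a genuine obstacle: the Borel--Cantelli estimate is standard, and absoluteness of Lebesgue measure on Borel sets is routine. The minor care required is simply to make the choice of the bounding function canonical (taking the minimum $k$) so that the construction descends to $\Nbf$.
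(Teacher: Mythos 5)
Your proposal is correct and follows essentially the same route as the paper: use the $\omega^\omega$-bounding property of $\Bor$ to send a name $\dot f$ to a ground-model bound, take $\Psi_+$ to be the (check-name) identity, and conclude via \autoref{def:tukername}(2) and the name-version of \autoref{lem:tukey}. Your only addition is making the bound canonical via the Boolean values $\lVert \dot f(n)>k\rVert$ and Borel--Cantelli, which nicely makes explicit the absoluteness needed for condition (2.2) that the paper's proof leaves implicit.
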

\begin{proof}
Let $\dot g\in\Mbf^\Bor_{\omega^\omega}$. By $\omega^\omega$-bounding find $h_{\dot g}\in\Mbf$ such that $\Vdash \dot g\leq^* h_{\dot g}$. Next we define $\Psi_-(\dot g):=h_{\dot g}$ and $\Psi_+(f):=f$ for $f\in\omega^\omega\cap\Mbf$. It is clear that $\Psi_-(\dot g)\leq^*f$ implies $\Vdash \dot g\leq^*\Psi_+(f)$.
\qedhere{\,\textrm{(\autoref{DBradom}})}
\end{proof}

In a similar way to \autoref{addcohen} it can be proved an analogous result for ramdon forcing.

\begin{lemma}\label{lem:cofinalities}
$\Nwf\preceq_{\Nbf^\Bor,\Nbf}^{\Mbf^\Bor,\Mbf}\Nwf$. As a consequence ,
 $\cof(\Nwf)_{\Nbf}^{\Mbf}\Rightarrow\cof(\Nwf)_{\Nbf[c]}^{\Mbf[c]}$ and $\add(\Nwf)_{\Nbf}^{\Mbf}\Rightarrow\add(\Nwf)_{\Nbf[c]}^{\Mbf[c]}$.
\end{lemma}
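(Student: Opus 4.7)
The plan is to transcribe the proof of \autoref{addcohen}, replacing Cohen forcing $\Cor$ by random forcing $\Bor$ and the meager ideal $\Mwf$ by the null ideal $\Nwf$. The three ingredients are: (i) a representation of $\Bor$-names for Borel null sets by Borel null sets in the product $2^\omega\times2^\omega$; (ii) Sikorski's measure-algebra isomorphism theorem; and (iii) Fubini's theorem.

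First I would show that for every $\dot C\in\Mbf^\Bor_\Nwf$ there is a Borel null set $C'\subseteq 2^\omega\times 2^\omega$ coded in $\Mbf$ with $\Vdash_\Bor C'_{\dot r}=\dot C$ (it would actually suffice to have $\Vdash_\Bor \dot C\subseteq C'_{\dot r}$). This is the random analog of the approximation-by-nowhere-dense-sections step in \autoref{addcohen}: a name for a null $G_\delta$ is a decreasing sequence of open sets of vanishing measure, and this sequence is coded by a single Borel null set in $2^\omega\times2^\omega$ via the identification of the product measure algebra with a dense subalgebra of the measure algebra of the generic extension.

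Next, by Sikorski's isomorphism theorem (\cite[Thm. 32.5]{sik}) applied to the separable atomless probability measure algebras, there exists a Borel isomorphism $\varphi:2^\omega\to2^\omega\times2^\omega$ with $A\in\Nwf$ iff $\varphi(A)\in\Nwf\otimes\Nwf$ (Fubini product of null ideals); a fixed Borel code for $\varphi$ lives in $\Nbf$ and, by absoluteness, realizes the same isomorphism in both $\Nbf$ and $\Mbf$. Define $\Psi_-(\dot C):=\varphi^{-1}[C']$, which is a Borel null set coded in $\Mbf$, and in $\Nbf$ whenever $\dot C\in\Nbf^\Bor$. For a Borel null $E\subseteq2^\omega$ coded in $\Mbf$, Fubini gives that $\varphi(E)$ has null vertical sections outside a null set, so $\varphi(E)_{\dot r}$ is forced to be null by randomness of $\dot r$ over $\Mbf$; set $\Psi_+(E):=\varphi(E)_{\dot r}$. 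If $E\in\Nbf$, then $\Psi_+(E)\in\Nbf^\Bor_\Nwf$, which secures clause (2.2) of \autoref{def:tukername}.

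The forcing implication $\Psi_-(\dot C)\subseteq E\Rightarrow\Vdash\dot C\subseteq\Psi_+(E)$ is then immediate: from $\varphi^{-1}[C']\subseteq E$ one gets $C'\subseteq\varphi(E)$, whence $C'_{\dot r}\subseteq\varphi(E)_{\dot r}$, and by the defining property of $C'$ this reads $\dot C\subseteq\Psi_+(E)$. The anticipated main obstacle is step (i): making the assignment $\dot C\mapsto C'$ definable and, crucially, respect the submodel structure (i.e.\ producing a $C'\in\Nbf$ from $\dot C\in\Nbf^\Bor$). Once that bookkeeping is carried out, the remaining verifications are a line-by-line copy of \autoref{addcohen}, and the consequence about $\add(\Nwf)$ and $\cof(\Nwf)$ in the random extension follows from \autoref{lem:tukey}.
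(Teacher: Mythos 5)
Your proposal is correct and matches the paper's intention exactly: the paper gives no explicit proof of this lemma, saying only that it is obtained ``in a similar way to'' \autoref{addcohen}, and your transcription (a product-coded Borel null set $C'$ with $\Vdash C'_{\dot r}=\dot C$, Sikorski's isomorphism $\varphi$ exchanging $\Nwf$ with the Fubini product $\Nwf\times\Nwf$, $\Psi_-(\dot C):=\varphi^{-1}[C']$, $\Psi_+(E):=\varphi(E)_{\dot r}$, with Fubini in place of Kuratowski--Ulam) is precisely that argument. The bookkeeping point you flag about making $\dot C\mapsto C'$ definable and compatible with the submodels is glossed over in the paper's Cohen-case proof as well, so your treatment is, if anything, more careful than the source.
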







Now we prove the relationship between $\Dbf$ and $\Cbf_\Nwf$ after adding a single random real.
 
\begin{lemma}\label{randomdn}
$\Cbf_\Nwf^{\perp}\preceq_{\Nbf^\Bor,\Nbf}^{\Mbf^\Bor,\Mbf}\Dbf$. In particular, $\dfrak_{\Nbf}^{\Mbf}\Rightarrow\non(\Nwf)_{\Nbf[r]}^{\Mbf[r]}$ and $\cov(\Nwf)_{\Nbf[r]}^{\Mbf[r]}\Rightarrow\bfrak_{\Nbf}^{\Mbf}$. 
\end{lemma}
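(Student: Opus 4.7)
My plan follows the pattern of the preceding lemmas in this section by exploiting the $\omega^\omega$-boundedness of random forcing (encapsulated in \autoref{DBradom}) to build $\Psi_-$, and by defining $\Psi_+$ through an absolute $f$-dependent sampling of the random real $\dot r$.

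For $\Psi_-$, I would fix an absolute ground-model enumeration $\Omega_n = \{a_{n,k} : k \in \omega\}$ of each countable set $\Omega_n$ from \autoref{examplebasic}(iii). Given $\dot A \in \Mbf^\Bor_\Nwf$, canonically represent $\dot A \subseteq N_{\dot x}^*$ for some $\dot x \in \Mbf^\Bor_\Omega$, and let $\dot k_{\dot x}$ denote the $\Bor$-name in $\omega^\omega$ defined by $\dot x(n) = a_{n, \dot k_{\dot x}(n)}$. By $\omega^\omega$-boundedness of $\Bor$, pick $h_{\dot A} \in (\omega^\omega)^\Mbf$ with $\Vdash \dot k_{\dot x} \leq^* h_{\dot A}$ and set $\Psi_-(\dot A) := h_{\dot A}$. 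Absoluteness of the coding places $h_{\dot A}$ in $\Nbf$ whenever $\dot A \in \Nbf^\Bor_\Nwf$, giving the preservation half of (2.2).

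For $\Psi_+$, given $f \in (\omega^\omega)^\Mbf$ (WLOG strictly increasing), define $\Psi_+(f) \in \Mbf^\Bor_{2^\omega}$ by $\Psi_+(f)(n) := \dot r(g_f(n))$, where $g_f \in (\omega^\omega)^\Mbf$ is an absolutely $f$-definable strictly increasing function chosen to outrun the Borel-measurable depth of dependence of $\dot k_{\dot x}(n)$ on $\dot r$. Absoluteness of $g_f$ in $f$ gives $\Psi_+[(\omega^\omega)^\Nbf] \subseteq \Nbf^\Bor_{2^\omega}$, completing (2.2). To verify (2.1), assume $h_{\dot A} \leq^* f$: then $\dot k_{\dot x}(n) \leq f(n)$ for a.a. $n$, so $\dot x(n) \in \{a_{n,k} : k \leq f(n)\}$ is a ground-model finite set; after modifying $\dot x$ on a null set of $\dot r$ (preserving $\dot A \subseteq N_{\dot x}^*$), one may assume $\dot k_{\dot x}(n)$ is clopen-measurable with depth dominated by $f$. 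The sample $\dot r(g_f(n))$ is then independent of $\dot x(n)$ under the random measure, and a Fubini argument on the product decomposition of the random measure at each level $n$ shows $\Vdash \Psi_+(f) \notin N_{\dot x}^* \supseteq \dot A$.

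The main obstacle is converting the range bound given by $\omega^\omega$-boundedness into a depth bound on the Borel-measurable dependence of $\dot k_{\dot x}(n)$ on the random real $\dot r$. This is resolved by clopen approximation of each Borel function $\dot k_{\dot x}(n)$ with errors diagonalizing over $n$ (e.g., error $2^{-2^n}$ at level $n$, so that Borel--Cantelli rules out cumulative damage), modifying $\dot x$ on a null set while preserving $\dot A \subseteq N_{\dot x}^*$; the depth of the approximating clopen function is then absorbed into $h_{\dot A}$ by enlarging it pointwise. With the depth bound in hand, $g_f$ can be chosen as required and the Fubini/independence argument carries $\Psi_+(f)$ outside $\dot A$, yielding the Tukey-with-models reduction; the two named consequences follow from \autoref{lem:tukey}.
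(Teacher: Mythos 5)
Your overall architecture --- extract a ground-model function from the name of the null set and let $\Psi_+(f)$ be the random real sampled along a function determined by $f$ --- is the same as the paper's, but the two places where the real work happens are not carried out, and one of them is wrong as stated. First, your definition of $g_f$ is circular: $\Psi_+$ must depend on $f$ alone, yet $g_f$ is ``chosen to outrun the Borel-measurable depth of dependence of $\dot k_{\dot x}(n)$ on $\dot r$'', which depends on the name $\dot A$. Your proposed repair (absorb the depths into $h_{\dot A}$) is the right instinct, but then the statement that actually needs proof is an index-shifted overlap estimate: membership of the sampled real in $\bigcup_{s\in \dot x(n)}[s]$ is evaluated using the bits $\dot r(g_f(0)),\dots,\dot r(g_f(\ell-1))$ (with $\ell$ the length of the strings in $\dot x(n)$), and for large $n$ the early positions $g_f(i)$ inevitably lie below the decision depth $d_n$ of $\dot x(n)$, so full independence fails. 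One must show that the number of overlapping positions stays enough below $n$ that the factor $2^{\mathrm{overlap}}$ is beaten by the measure bound $2^{-n}$; this is exactly what the paper's conditions (a),(b) on $\Psi_-(\dot B)$ together with \autoref{covb} deliver, and nothing in your proposal establishes it (note that taking $g_f=f$ with $h_{\dot A}(n)\geq d_n$ does not suffice; one needs, e.g., monotone depths plus a shift such as $g_f(i)=f(2i)$, and this must be proved, not asserted).

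Second, the step ``modify $\dot x$ on a null set of $\dot r$, preserving $\dot A\subseteq N^*_{\dot x}$, so that each $\dot x(n)$ is clopen-measurable'' is not a legitimate move: making a $\Bor$-name clopen-measurable forces you to change it on a set of \emph{positive} measure (only finitely many members of a deciding antichain can be retained), and on the leftover positive-measure piece the covering $\dot A\subseteq N^*_{\dot x}$ can fail --- preserving it is precisely the nontrivial point, not a remark; your Borel--Cantelli handling of the errors does not address this. The standard repairs are either to keep the name and run a truncated-antichain estimate with additive errors $\epsilon_n$ at level $n$, or, as the paper does, to avoid names altogether: write $\Vdash\dot B=A_{\dot r}$ for a ground-model Borel null set $A\subseteq2^\omega\times2^\omega$, cover $A$ by rectangles $[s_n]\times[t_n]$ with summable measures, read $\Psi_-(\dot B)$ off the lengths $|s_n|$, and check by a direct computation (via \autoref{covb}) that the ground-model set $H=\{x:\la x,x_f\ra\in\bigcap_{m<\omega}\bigcup_{n\geq m}[s_n]\times[t_n]\}$ is null, so the random real avoids it. Finally, the $\omega^\omega$-bounding step on the indices $\dot k_{\dot x}(n)$ buys nothing: knowing $\dot x(n)$ lies among the first $f(n)$ elements of $\Omega_n$ only bounds the union by measure $f(n)2^{-n}$, which is not small; the quantities that must be dominated are decision depths and string lengths, which is what the paper's $\Psi_-$ encodes directly.
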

\begin{proof}
Let $\dot B\in\Mbf^\Bor_{\Nwf}$. Find a Borel null  $A\subseteq 2^\omega\times2^\omega$ such that $\Vdash \dot B=A_{\dot r}$. Since $A$ is a null set, choose sequences $s_n, t_n\in2^{<\omega}$ such that $|s_n|=|t_n|$ and
\[A\subseteq \bigcap_{m<\omega}\bigcup_{n\geq m}[s_n]\times[t_n]\textrm{\ and\ }\sum_{n=1}^{\infty}2^{-|s_n|-|t_n|}<\infty\]
Find an increasing function $\Psi_-(\dot B)\in\omega^\omega$ by indiction on $n$ such that 
\begin{itemize}
    \item[(a)] $j\leq \Psi_-(\dot B)(n)\to |s_j|<\Psi_-(\dot B)(n+1)$.
    \item[(b)] $\sum_{j\geq \Psi_-(\dot B)(n)}\Lb([s_j]\times[t_j])\leq\frac{\Lb([s_n]\times[t_n])}{2^{n+2}} $ (where $\Lb$ denote the Lebesgue measure).
\end{itemize} 
From (a) and (b) it follows that 
\begin{equation*}
\begin{split}
(\star)\,\,\sum\limits_{\Psi_-(\dot B)(n)\leq j< \Psi_-(\dot B)(n+1)}\frac{2^{|\Psi_-(\dot B)^{-1}[|s_j|]|}}{2^{2|s_j|}}&\leq\sum\limits_{\Psi_-(\dot B)(n)\leq j< \Psi_-(\dot B)(n+1)}\frac{2^{n+2}}{2^{2|s_j|}}\\
&\leq\Lb([s_n]\times[t_n]).
\end{split}
\end{equation*}

On the other hand, define $\Psi_+(f):=\dot r_f$ for $f\in\omega^\omega$. 

To conclude the proof it suficies to prove that, $\Psi_-(\dot B)\leq^* f$ implies $\Vdash B\not\ni \dot r_f$. To do this, assume that $\Psi_-(\dot B)\leq^* f$.
 
Work in $\Mbf$. Define \[H:=\Big\{x:\la x,x_{f}\ra\in\bigcap_{m<\omega}\bigcup_{n\geq m}[s_n]\times[t_n]\Big\}\] It sufficies to prove that $H$ has measure zero. Note that 
\[H= \bigcap_{m<\omega}\bigcup_{n\geq m}\Big\{x:\la x,x_{f}\ra\in[s_n]\times[t_n]\ra\Big\}\]
\begin{claim}\label{covb}
For any increasing function $f\in\omega^\omega$ and $s,t \in 2^{<\omega}$, 
\[\Lb\Big(\Big\{x:\la x,x_f\ra\in[s]\times[t]\Big\}\Big)\leq\frac{2^{|f^{-1}(|s|)|}}{2^{|s|+|t|}}\]
\end{claim}
\begin{proof}
For a proof to see \cite[Claim 2.1]{cardona}.
\qedhere{\textrm{\ (\autoref{covb}})}
\end{proof}
We continue the proof of \autoref{randomdn}. By \autoref{covb} and $(\ast)$ it follows that $H$ has measure zero. Since $\Vdash\dot r$ is a random real over $\Mbf$, $\Vdash\dot r\not\in H$ which means that $\Vdash \la \dot r,\dot r_f\ra\not\in A$, that is $\Vdash \dot r_{f}\not\in A_{\dot r}=\dot B$.
\qedhere{\textrm{\ (\autoref{randomdn}})}
\end{proof}

\begin{lemma}[{\cite[Lemma 3.5]{paw86}}]\label{borelrandom}
There  is some $\Bor$-name of a Borel function $\dot F:2^\omega\to 2^\omega$ such that $\Vdash$ for any $A\in\Mwf$, $F^{-1}[A]$ is both meager and coded in $\Vbf$.
\end{lemma}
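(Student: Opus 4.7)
The plan is to exploit the $\omega^\omega$-bounding property of $\Bor$ together with the canonical representation of meager Borel sets by elements of $\Xi$ (see \autoref{examplebasic}(vi)), and to encode into $\dot F$ a choice of ground-model dominating code, keyed off the random real $\dot r$.

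First I would reduce the quantifier over meager sets to one over codes. Every meager Borel set $A\subseteq 2^\omega$ in $\Vbf[r]$ is contained in $2^\omega\setminus G_f$ for some $f\in\Xi\cap\Vbf[r]$, so it is enough to arrange $\Vdash F^{-1}[2^\omega\setminus G_{\dot f}]$ is meager and $\Vbf$-coded for every $\Bor$-name $\dot f$ for a member of $\Xi$. The output code can then be described from a ground-model witness directly, because any Borel set coded by an element of $\Xi\cap\Vbf$ has a Borel code in $\Vbf$ by absoluteness.

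Second, I would use $\omega^\omega$-bounding to attach to each $\Bor$-name $\dot f$ a ground-model $\widehat f\in\Xi$ with $\Vdash G_{\widehat f}\subseteq G_{\dot f}$. Concretely, for each $s\in 2^{<\omega}$ consider a maximal antichain of $\Bor$-conditions deciding $\dot f(s)$; on a positive-measure subcollection one sees only finitely many possible values of $\dot f(s)$, and any common extension of those values above $s$ can serve as $\widehat f(s)$. Because $\widehat f\in\Vbf$, the meager set $2^\omega\setminus G_{\widehat f}$ is Borel-coded in $\Vbf$.

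Third, I would construct $\dot F$ so that it threads $x\in 2^\omega$ through the tree of extensions defined by $\widehat f$, using $\dot r$ as the random-generic choice parameter at each level. The effect is that, whenever $x\in G_{\widehat f}$, one has $F(x)\in G_{\dot f}$, so $F^{-1}[2^\omega\setminus G_{\dot f}]\subseteq 2^\omega\setminus G_{\widehat f}$, giving both meagerness and a ground-model code. The map $\dot F$ itself is Borel (indeed continuous off a $\Vbf[r]$-null set) because the choice tree is a recursive object.

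The main obstacle is that a \emph{single} uniform Borel function $\dot F$ must witness the above inclusion simultaneously for \emph{every} meager Borel $A$, whereas the auxiliary $\widehat f$ depends on $\dot f$. This is handled by a fusion-style recursion along an enumeration of a countable dense family of $\Bor$-names for members of $\Xi$, combined with a measure-theoretic bookkeeping — analogous to the estimate used in the proof of \autoref{randomdn} — that at stage $n$ shrinks the reservoir of admissible $\dot r$ by at most $2^{-(n+2)}$, keeping the set of $\dot r$ along which $\dot F$ is well-defined of full measure. Once $\dot F$ survives this diagonalization, the conclusion is immediate: for any meager $A\in\Vbf[r]$ one picks a name $\dot f$ coding a meager superset, and the preimage $F^{-1}[A]\subseteq 2^\omega\setminus G_{\widehat f}$ is meager with a Borel code coming from $\widehat f\in\Vbf$.
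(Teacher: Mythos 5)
The decisive step of your plan, Step 2 --- the claim that for every $\Bor$-name $\dot f$ for a member of $\Xi$ there is $\widehat f\in\Xi\cap\Vbf$ with $\Vdash G_{\widehat f}\subseteq G_{\dot f}$ --- is not merely unjustified; it is false, and the rest of the argument collapses with it. Such an $\widehat f$ would say that every meager set of $\Vbf[r]$ is contained in a meager set coded in $\Vbf$ (so you could take $F$ to be the identity and the lemma would be trivial, a first warning sign). Random forcing destroys exactly this: fix an interval partition $(I_n)_{n<\omega}$ of $\omega$ with $\sum_{n}2^{-|I_n|}<\infty$; for each $x\in2^\omega\cap\Vbf$ the set $\{y:\exists^\infty n\,(y{\upharpoonright}I_n=x{\upharpoonright}I_n)\}$ is null and coded in $\Vbf$, so $r$ avoids it, hence $2^\omega\cap\Vbf\subseteq\{x:\forall^\infty n\,(x{\upharpoonright}I_n\neq r{\upharpoonright}I_n)\}$, a meager set of $\Vbf[r]$. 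Writing this set in the form $2^\omega\setminus G_{\dot f}$ (let $\dot f(s)$ extend $s$ by copying $r$ on the next block $I_n$), one gets a name $\dot f$ forced to satisfy ``no real of $\Vbf$ lies in $G_{\dot f}$''. But any $\widehat f\in\Xi\cap\Vbf$ has a ground-model real inside the $\Vbf$-coded comeager set $G_{\widehat f}$, and Borel membership is absolute, so $G_{\widehat f}\subseteq G_{\dot f}$ cannot be forced. (This is the dual of ``a Cohen real makes the old reals null'': a random real makes the old reals meager, so the $\Vbf$-coded meager sets are not cofinal in $\Mwf\cap\Vbf[r]$.) Your local justification also misuses $\omega^\omega$-bounding: the finitely many values of $\dot f(s)$ seen on a set of large measure are strings that need not be compatible, so they have no common extension; the exceptional measure accumulates over the infinitely many $s$; and bounding dominates functions into $\omega$ --- it does not finitize, on a single condition, the decisions of a name for a function into $2^{<\omega}$ in the direction you need.

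The final repair does not rescue this: a ``fusion along a countable dense family of $\Bor$-names for members of $\Xi$'' cannot exist, because no countable family of meager sets (or of names for them, even modulo $\Vbf$-coded sets) is cofinal in the meager ideal --- that would again yield the false covering property above. Note also that the paper itself gives no proof of \autoref{borelrandom}; it quotes Pawlikowski, whose argument runs in the opposite direction to yours: the whole point of the nontrivial $F$ is to compensate for the failure of your Step 2. There $F$ is built directly from the random real (roughly, $x$ selects positions and patterns inside consecutive blocks of $r$), and a measure-theoretic (Borel--Cantelli/density) argument shows that for an arbitrary name for a nowhere dense set, the dense set of finite conditions pushing $F(x)$ out of it can already be computed in $\Vbf$; the ground-model code of $F^{-1}[A]$ comes from these witnesses, not from a ground-model meager superset of $A$, which in general does not exist.
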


The following lemma discusses the behavior of the structure $\Cbf_\Mwf$ in the extension by adding one random real. 

\begin{corollary}\label{randomcmp}
$\Cbf_\Mwf\preceq_{\Nbf,\Nbf[r]}^{\Mbf,\Mbf[r]}\Cbf_\Mwf$. In particular, $\cov(\Mwf)_{\Nbf[r]}^{\Mbf[r]}\Rightarrow\cov(\Mwf)_{\Nbf}^{\Mbf}$ and $\non(\Mwf)_{\Nbf}^{\Mbf}\Rightarrow\non(\Mwf)_{\Nbf[r]}^{\Mbf[r]}$.
\end{corollary}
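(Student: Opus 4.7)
The plan is to push meager sets of $\Mbf[r]$ back to $\Mbf$ through the Pawlikowski Borel function $\dot F$ supplied by \autoref{borelrandom}. Let $\dot F$ be a $\Bor$-name for a Borel function $2^\omega\to 2^\omega$ as in \autoref{borelrandom}, chosen by an absolute Borel construction so that it can simultaneously be read as a $\Bor$-name in $\Nbf$ and in $\Mbf$. Denote its evaluation by $r$ by $F$.

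Next I would define $\Psi_-\colon 2^\omega\cap\Mbf\to 2^\omega\cap\Mbf[r]$ and $\Psi_+\colon \Mwf^{\Mbf[r]}\to\Mwf^{\Mbf}$ by setting $\Psi_-(x):=F(x)$ and $\Psi_+(A):=F^{-1}[A]$. By \autoref{borelrandom}, for any $A\in\Mwf^{\Mbf[r]}$ the set $F^{-1}[A]$ is meager and admits a Borel code in $\Mbf$, so $\Psi_+$ indeed lands in $\Mwf^{\Mbf}$. Clause (a) of \autoref{modeltukey} is then immediate from the set-theoretic identity $F(x)\in A \Leftrightarrow x\in F^{-1}[A]$. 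The model-preservation clause (b) reduces to two absoluteness checks: if $x\in 2^\omega\cap\Nbf$ then $F(x)\in 2^\omega\cap\Nbf[r]$ because $\dot F$ and $x$ both belong to $\Nbf$; and if $A\in\Mwf^{\Nbf[r]}$, then applying \autoref{borelrandom} inside the pair $(\Nbf,\Nbf[r])$ to a Borel code of $A$ living in $\Nbf[r]$ produces a Borel code of $F^{-1}[A]$ in $\Nbf$, so $F^{-1}[A]\in\Mwf^{\Nbf}$.

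Once the reduction $\Cbf_\Mwf\preceq_{\Nbf,\Nbf[r]}^{\Mbf,\Mbf[r]}\Cbf_\Mwf$ is in place, both consequences follow by the argument of \autoref{lem:tukey} (applied to the pairs $(\Nbf,\Mbf)$ and $(\Nbf[r],\Mbf[r])$): it yields $\dfrak(\Cbf_\Mwf)_{\Nbf[r]}^{\Mbf[r]}\Rightarrow\dfrak(\Cbf_\Mwf)_{\Nbf}^{\Mbf}$ and $\bfrak(\Cbf_\Mwf)_{\Nbf}^{\Mbf}\Rightarrow\bfrak(\Cbf_\Mwf)_{\Nbf[r]}^{\Mbf[r]}$, which are exactly the stated implications for $\cov(\Mwf)$ and $\non(\Mwf)$. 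The main obstacle will be the bookkeeping in clause (b): one has to verify that the construction of $\dot F$ in \autoref{borelrandom} is uniform enough to be recognised identically in $\Nbf$ and in $\Mbf$, so that taking preimages under $F$ commutes with relativization to the submodel. Once this is handled, the entire proof is essentially a packaging of \autoref{borelrandom}.
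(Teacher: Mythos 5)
Your proposal is correct and follows essentially the same route as the paper: it invokes \autoref{borelrandom}, sets $\Psi_-(z):=F(z)$ and $\Psi_+(A):=F^{-1}[A]$, and verifies clause (a) of \autoref{modeltukey} via the trivial preimage identity. Your extra attention to clause (b) and to the uniformity of the name $\dot F$ across $\Nbf$ and $\Mbf$ only makes explicit what the paper's proof leaves implicit.
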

\begin{proof}
By \autoref{borelrandom}, there is a $\Bor$-name of a Borel function $\dot F:2^\omega\to 2^\omega$ coded in $\Nbf[r]$ such that for any $A\in\Mwf\cap \Mbf[r]$, $F^{-1}[A]$ is both meager and coded in $\Mbf$. Work in $\Mbf[r]$.  Let $A$ be a Borel set in $\Mwf\cap\Mbf[r]$. Then $F^{-1}[A]$ is a meager set in $\Mbf$, so define $\Psi_+(A):=F^{-1}[A]$, and $\Psi_-(z):=F(z)$ for $z\in2^\omega$. It is clear that if $z\not\in F^{-1}[A]$ then $F(z)\not\in A$.
\qedhere{\textrm{\ (\autoref{randomcmp}})}
\end{proof}

As a consequence of \autoref{cicr}, \autoref{cor:charadd}, \autoref{DBradom}, and \autoref{randomcmp}, we get: 
\begin{corollary}
$\add(\Mwf)_{\Nbf[r]}^{\Mbf[r]}\Rightarrow\add(\Mwf)_{\Nbf}^{\Mbf}$ and $\cof(\Mwf)_{\Nbf}^{\Mbf}\Rightarrow\cof(\Mwf)_{\Mbf[r]}^{\Mbf[r]}$.
\end{corollary}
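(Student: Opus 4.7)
The plan is to treat the two implications separately, in each case reducing the statement about $\add(\Mwf)$ or $\cof(\Mwf)$ to statements about simpler cardinals already handled.

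For $\add(\Mwf)_{\Nbf[r]}^{\Mbf[r]}\Rightarrow\add(\Mwf)_{\Nbf}^{\Mbf}$ I would argue by contrapositive using \autoref{cor:charadd}. Suppose $\neg\add(\Mwf)_{\Nbf}^{\Mbf}$. Then there exist a Cohen real $c\in\Mbf$ over $\Nbf$ and a function $g\in\omega^\omega\cap\Mbf$ that dominates every member of $\omega^\omega\cap\Nbf[c]$. Since $r$ is random over $\Mbf$ and $\Nbf[c]\subseteq\Mbf$, $r$ is random over $\Nbf[c]$, so $c$ and $r$ are mutually generic over $\Nbf$; in particular $c$ remains Cohen over $\Nbf[r]$. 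Moreover, $\Nbf[r][c]=\Nbf[c][r]$, and by the $\omega^\omega$-bounding of random forcing every $f\in\omega^\omega\cap\Nbf[c][r]$ is dominated by some function in $\omega^\omega\cap\Nbf[c]$, hence by $g$. Thus the pair $(c,g)\in\Mbf[r]$ witnesses, via the forward direction of \autoref{cor:charadd} applied to the models $\Nbf[r]\subseteq\Mbf[r]$, that $\neg\add(\Mwf)_{\Nbf[r]}^{\Mbf[r]}$.

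For $\cof(\Mwf)_{\Nbf}^{\Mbf}\Rightarrow\cof(\Mwf)_{\Nbf[r]}^{\Mbf[r]}$ I would exploit the decomposition supplied by \autoref{cicr}(vii). Assume $\cof(\Mwf)_{\Nbf}^{\Mbf}$; then that lemma gives both $\non(\Mwf)_{\Nbf}^{\Mbf}$ and $\dfrak_{\Nbf}^{\Mbf}$. Applying \autoref{randomcmp} yields $\non(\Mwf)_{\Nbf[r]}^{\Mbf[r]}$, and applying \autoref{DBradom} yields $\dfrak_{\Nbf[r]}^{\Mbf[r]}$. Invoking \autoref{cicr}(vii) now with respect to the pair $\Nbf[r]\subseteq\Mbf[r]$ recombines these to give $\cof(\Mwf)_{\Nbf[r]}^{\Mbf[r]}$.

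The only delicate point is the mutual-genericity step in the first implication: one must check that $c\in\Mbf$, originally chosen Cohen over $\Nbf$, remains Cohen over $\Nbf[r]$, and that the single function $g\in\omega^\omega\cap\Mbf$ still dominates everything in $\omega^\omega\cap\Nbf[r][c]$ after passing to the random extension. Both are standard: mutual genericity follows from the equivalence ``$r$ is random over $\Nbf[c]$ iff $c$ is Cohen over $\Nbf[r]$'' once $c$ is Cohen over $\Nbf$ and $r$ is random over $\Mbf\supseteq\Nbf[c]$, while the domination clause is just the $\omega^\omega$-bounding of $\Bor$. Everything else is a direct combination of the four lemmas cited.
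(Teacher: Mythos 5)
Your second implication is correct and is exactly the paper's intended route: \autoref{cicr}(vii) splits $\cof(\Mwf)_{\Nbf}^{\Mbf}$ into $\non(\Mwf)_{\Nbf}^{\Mbf}$ together with $\dfrak_{\Nbf}^{\Mbf}$, \autoref{randomcmp} and \autoref{DBradom} transfer these two to the pair $\Nbf[r]\subseteq\Mbf[r]$, and \autoref{cicr}(vii) recombines them.

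The first implication, however, contains a genuine gap: the ``standard fact'' you invoke is false, and in fact the two hypotheses you have exclude the conclusion you draw. Fix an interval partition $\la I_n\ra_{n<\omega}$ of $\omega$ with $\sum_{n}2^{-|I_n|}<\infty$ and let $E(x,z)$ abbreviate $\exists^\infty n\,(x\upharpoonright I_n=z\upharpoonright I_n)$. For fixed $z$ the set $\{x:E(x,z)\}$ is null (Borel--Cantelli), while for fixed $x$ the set $\{z:\neg E(x,z)\}$ is meager (its complement is a countable intersection of open dense sets). Since $r$ is random over $\Mbf\supseteq\Nbf[c]$, $r$ avoids the null set $\{x:E(x,c)\}$ coded in $\Nbf[c]$, i.e.\ $\neg E(r,c)$ holds; but then $c$ belongs to the meager set $\{z:\neg E(r,z)\}$, which is coded in $\Nbf[r]$, so $c$ is \emph{not} Cohen over $\Nbf[r]$. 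The same computation shows that ``$r$ is random over $\Nbf[c]$'' and ``$c$ is Cohen over $\Nbf[r]$'' can never hold simultaneously, so the equivalence you appeal to in your last paragraph is also false; this non-commutation of Cohen and random reals is precisely the phenomenon Pawlikowski's theorem (and this paper) is about. The repair is to witness \autoref{cor:charadd} over $\Nbf[r]\subseteq\Mbf[r]$ not by $c$ itself but by $c':=F(c)$, where $F$ is the Borel function coded in $\Nbf[r]$ provided by \autoref{borelrandom} (the map underlying \autoref{randomcmp}): for any meager $A$ coded in $\Nbf[r]$, $F^{-1}[A]$ is meager and coded in $\Nbf$, hence $c\notin F^{-1}[A]$ and so $F(c)\notin A$; thus $c'\in\Mbf[r]$ is Cohen over $\Nbf[r]$. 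Your domination step then goes through unchanged: $\Nbf[r][c']\subseteq\Nbf[c][r]$, every function in $\omega^\omega\cap\Nbf[c][r]$ is dominated by one in $\omega^\omega\cap\Nbf[c]$ by the $\omega^\omega$-bounding of $\Bor$ (the content of \autoref{DBradom}), hence by $g$, and \autoref{cor:charadd} applied to $\Nbf[r]\subseteq\Mbf[r]$ yields $\neg\add(\Mwf)_{\Nbf[r]}^{\Mbf[r]}$.
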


Notice that the next result is the converse of \autoref{randomcmp}.

\begin{lemma}\label{CNradom} Let $h\in\omega^\omega$ suc that $\sum_{i<\omega}\frac{1}{h(i)}<\infty$. Then $\Ed\preceq_{\Nbf^\Bor,\Nbf}^{\Mbf^\Bor,\Mbf}\aLc(\omega,h)$. As a consequence, we get $\cov(\Mwf)_\Nbf^\Mbf\Rightarrow\cov(\Mwf)_{\Nbf[r]}^{\Mbf[r]}$ and $\non(\Mwf)_{\Nbf[r]}^{\Mbf[r]}\Rightarrow\non(\Mwf)_{\Nbf}^{\Mbf}$.
\end{lemma}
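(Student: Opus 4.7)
The plan is to construct the required maps via a Borel--Cantelli-type argument tailored to the random algebra. Given a $\Bor$-name $\dot x\in\Mbf^{\Bor}_{\omega^\omega}$, for each $n<\omega$ I would define
\[
\Psi_-(\dot x)(n):=\bigl\{k<\omega:\Lb(\|\dot x(n)=\check k\|)\geq 1/h(n)\bigr\},
\]
where $\|\dot x(n)=\check k\|$ is the Boolean value in $\Bor$ (a Borel set modulo null). Since as $k$ varies these events are pairwise disjoint with measures summing to $1$, pigeonhole gives $|\Psi_-(\dot x)(n)|\leq h(n)$, so $\Psi_-(\dot x)\in\Scal(\omega,h)$. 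Because Lebesgue measure of Borel sets is absolute between transitive models, if $\dot x\in\Nbf^\Bor_{\omega^\omega}$ then $\Psi_-(\dot x)\in\Nbf$. For $y'\in\omega^\omega\cap\Mbf$ I would simply set $\Psi_+(y'):=\check y'$, which sends $\omega^\omega\cap\Nbf$ into $\Nbf^\Bor_{\omega^\omega}$; this gives condition (2.2) of \autoref{def:tukername}.

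The heart of the argument is the verification of (2.1). Suppose $\Psi_-(\dot x)\not\ni^\infty y'$, i.e., $y'(n)\notin\Psi_-(\dot x)(n)$ for all but finitely many $n$; by definition this means $\Lb(\|\dot x(n)=y'(n)\|)< 1/h(n)$ for cofinitely many $n$. Since $\sum_{n<\omega}1/h(n)<\infty$ by hypothesis, the Borel set
\[
B:=\bigcap_{m<\omega}\bigcup_{n\geq m}\|\dot x(n)=y'(n)\|
\]
has Lebesgue measure zero by the Borel--Cantelli lemma, and $B$ is coded in $\Mbf$ since $\dot x$ and $y'$ are. As $r$ is random over $\Mbf$, $r\notin B$, hence in $\Mbf[r]$ one has $\dot x(n)[r]\neq y'(n)$ for cofinitely many $n$; equivalently $\Vdash\dot x\neq^*\check y'$, as required.

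The stated consequences then follow by combining \autoref{lem:tukey} with the lemma immediately after \autoref{def:tukername} that passes from $\preceq^{\Mbf^\Bor,\Mbf}_{\Nbf^\Bor,\Nbf}$ to $\preceq^{\Mbf[r],\Mbf}_{\Nbf[r],\Nbf}$, using the identifications $\bfrak(\Ed)=\bfrak(\aLc(\omega,h))=\non(\Mwf)$ and $\dfrak(\Ed)=\dfrak(\aLc(\omega,h))=\cov(\Mwf)$ recalled in \autoref{examplebasic}(ii).

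The main obstacle is condition (2.1): one must ensure that the witnessing set $B$ is \emph{both} of measure zero \emph{and} coded in the ground model $\Mbf$, so that the random real $r$ provably avoids it. The summability hypothesis $\sum 1/h(n)<\infty$ is precisely what makes Borel--Cantelli applicable in $\Mbf$, and it is what forces the threshold $1/h(n)$ in the definition of $\Psi_-$ to be compatible with the slalom size bound $h(n)$ coming from pigeonhole. Without this balance between the threshold and the dimension of the slalom, neither the membership $\Psi_-(\dot x)\in\Scal(\omega,h)$ nor the Borel--Cantelli step would go through.
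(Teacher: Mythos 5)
Your proposal is correct and is essentially the paper's own argument: the paper defines the very same slalom, $\varphi_{\dot g}(m)=\big\{k:\Lb\big(\bigcup\{[p^m_n]:p^m_n\Vdash\dot g(m)=k\}\big)>\tfrac{1}{h(m)}\big\}$ (that union being exactly your Boolean value $\|\dot x(m)=\check k\|$), uses the same pigeonhole bound to get membership in $\Scal(\omega,h)$, and takes $\Psi_+$ to be the identity. The only cosmetic difference is the final verification, where the paper shrinks an arbitrary condition $p$ by a set of measure at most $\sum_{i\geq k}1/h(i)<\Lb([p])$ rather than invoking Borel--Cantelli and the fact that the random real avoids ground-model-coded null sets; both phrasings rest on exactly the same summability hypothesis.
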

\begin{proof}  
Let $\dot{g}$ be a $\Bor$-name for a function in $\omega^\omega$. For each $m\in\omega$, let $\la p_n^m:n\in\omega\ra$ be a maximal antichain deciding the value of $\dot g(m)$. Next, let $\varphi_{\dot{g}}$ be the slalom defined by:
\[\varphi_{\dot{g}}(m):=\:\bigg\{k\in \omega:\Lb\Big(\bigcup \Big\{[p_n^m]:n<\omega,\,p_n^m\Vdash \dot g(m)=k\Big\}\Big)>\frac{1}{h(m)}\bigg\}.\]
From the definition of $\varphi_{\dot{g}}$, it is clear that $|\varphi_{\dot{g}}(m)|<h(m)$. Finally, put $\Psi(\dot{g}):=\varphi_{\dot{g}}$ and $\Psi_+(f):=f$ for $f\in\omega^\omega$. 

To complete the proof it suffices to check that $f\not\in^\infty\varphi_{\dot{g}}$ implies $\Vdash \dot{g}\neq^* f$. To this end, let $n\in\omega$ and $p\in\Bor$, such that $f(m)\not\in\varphi_{\dot{g}}(m)$ for all $m\geq n$. Find $k>n$ such that $\sum_{i=k}^\infty\frac{1}{h(i)}<\Lb([p])$. Now, setting $q:=p\smallsetminus\bigcup_{m=k}^{\infty}\bigcup \{[p_n^m]:n<\omega,\,p_n^m\Vdash \dot g(m)=f(m)\Big\}$, we get $q\Vdash \dot{g}(m)\neq \dot{f}(m)$ for all $m\geq k$, which finishes the proof of the lemma. 

\qedhere{\textrm{\ (\autoref{CNradom}})}
\end{proof}

As a consequence \autoref{cicr},  \autoref{cor:charadd} and \autoref{CNradom}, we get:

\begin{corollary}\label{randomcofm}
 $\cof(\Mwf)_{\Nbf[r]}^{\Mbf[r]}\Leftrightarrow\cof(\Mwf)_\Nbf^\Mbf$ and $\add(\Mwf)_\Nbf^\Mbf\Leftrightarrow\add(\Mwf)_{\Nbf[r]}^{\Mbf[r]}$.
\end{corollary}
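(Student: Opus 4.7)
The plan handles the two equivalences separately. One direction of each is already established in the corollary immediately after \autoref{randomcmp} (namely $\add(\Mwf)_{\Nbf[r]}^{\Mbf[r]}\Rightarrow\add(\Mwf)_\Nbf^\Mbf$ and $\cof(\Mwf)_\Nbf^\Mbf\Rightarrow\cof(\Mwf)_{\Nbf[r]}^{\Mbf[r]}$), so only the reverse implications remain.

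For $\cof(\Mwf)_{\Nbf[r]}^{\Mbf[r]}\Rightarrow\cof(\Mwf)_\Nbf^\Mbf$, I would use \autoref{cicr}(vii) in the extension to decompose the hypothesis as $\non(\Mwf)_{\Nbf[r]}^{\Mbf[r]}\wedge\dfrak_{\Nbf[r]}^{\Mbf[r]}$. The consequence of \autoref{CNradom} transfers $\non(\Mwf)_{\Nbf[r]}^{\Mbf[r]}$ down to $\non(\Mwf)_\Nbf^\Mbf$, and \autoref{radomcov}(ii) transfers $\dfrak_{\Nbf[r]}^{\Mbf[r]}$ down to $\dfrak_\Nbf^\Mbf$. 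Reapplying \autoref{cicr}(vii) in the base pair $\Nbf\subseteq\Mbf$ then reassembles these components into $\cof(\Mwf)_\Nbf^\Mbf$.

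For $\add(\Mwf)_\Nbf^\Mbf\Rightarrow\add(\Mwf)_{\Nbf[r]}^{\Mbf[r]}$ I argue by contrapositive through \autoref{cor:charadd}. Assuming $\neg\add(\Mwf)_{\Nbf[r]}^{\Mbf[r]}$, the lemma applied in the extension yields $c\in\Mbf[r]$ Cohen over $\Nbf[r]$ and $g\in\omega^\omega\cap\Mbf[r]$ dominating $\omega^\omega\cap\Nbf[r][c]$. Using the $\omega^\omega$-bounding of $\Bor$ I pick $\bar g\in\omega^\omega\cap\Mbf$ with $g\leq^*\bar g$, and by the contrapositive of the first consequence of \autoref{CNradom} I obtain some Cohen real $\bar c\in\Mbf$ over $\Nbf$. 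To close by applying \autoref{cor:charadd} in $\Nbf\subseteq\Mbf$ and derive a contradiction with $\add(\Mwf)_\Nbf^\Mbf$, it remains to verify that $\bar g$ dominates $\omega^\omega\cap\Nbf[\bar c]$.

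The main obstacle is precisely this last verification, because the Cohen real $c$ produced in $\Mbf[r]$ may fall outside $\Mbf$, so the dominating property of $\bar g$ over $\omega^\omega\cap\Nbf[r][c]$ does not translate directly to $\omega^\omega\cap\Nbf[\bar c]$. I would bridge this gap by writing $c=F(r)$ for a Borel $F$ coded in $\Mbf$ and exploiting the commutation of Cohen and random forcings together with the absoluteness of meagerness for Borel sets: for each Borel $f:2^\omega\to\omega^\omega$ coded in $\Nbf$, the set $A_f:=\{x\in 2^\omega:f(x)\leq^*\bar g\}$ is Borel and coded in $\Mbf$, and viewing $r$ as random over $\Nbf[\bar c]$ (via commutation) together with the fact that $c=F(r)$ lies in $A_f$ forces $A_f$ to be comeager in $\Mbf$; the Cohen real $\bar c\in\Mbf$ over $\Nbf$ therefore lies in every $A_f$, giving $f(\bar c)\leq^*\bar g$ uniformly in $f$, so that $\bar g$ dominates $\omega^\omega\cap\Nbf[\bar c]$ as required.
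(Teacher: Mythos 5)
Your treatment of the $\cof(\Mwf)$ equivalence is correct and is essentially the intended route: decompose $\cof(\Mwf)_{\Nbf[r]}^{\Mbf[r]}$ via \autoref{cicr}(vii) applied to $\Nbf[r]\subseteq\Mbf[r]$, pull $\non(\Mwf)$ down by \autoref{CNradom} and $\dfrak$ down by \autoref{radomcov}(ii), and reassemble with \autoref{cicr}(vi). Likewise, your skeleton for the $\add(\Mwf)$ direction (contrapositive through \autoref{cor:charadd}, $\omega^\omega$-bounding for the dominating function, the contrapositive of \autoref{CNradom} for a Cohen real over $\Nbf$ in $\Mbf$) is the right starting point.

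The bridge you propose to close the $\add(\Mwf)$ direction, however, does not work, and this is a genuine gap. First, from $c=F(r)\in A_f$ you may only conclude that $F^{-1}[A_f]$ is a non-null Borel set coded in $\Mbf$ (it contains a random real over $\Mbf$); nothing whatsoever forces $A_f=\{x: f(x)\leq^*\bar g\}$ to be comeager, and the appeal to ``commutation'' is idle here: $r$ is automatically random over $\Nbf[\bar c]$ because $\Nbf[\bar c]\subseteq\Mbf$, but randomness over $\Nbf[\bar c]$ says nothing about the category of $A_f$. Second, and independently, even if each $A_f$ were comeager you could not conclude $\bar c\in A_f$: the parameter $\bar g$ (and $F$) lies in $\Mbf$, so the complement of $A_f$ is a meager set coded in $\Mbf$, not in $\Nbf$; a real of $\Mbf$ that is Cohen over $\Nbf$ need not belong to comeager sets coded in $\Mbf$ (no real of $\Mbf$ belongs to all of them), and your $\bar c$ was fixed in advance, independently of the family of sets $A_f$ indexed by the reals of $\Nbf$. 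Thus you end up with a Cohen real over $\Nbf$ in $\Mbf$ and, separately, a function in $\Mbf$ dominating the relevant reals of $\Nbf[c]$ for the \emph{other} Cohen real $c\notin\Mbf$ --- but \autoref{cor:charadd} requires the pair to be matched: one $c'\in\Mbf$ Cohen over $\Nbf$ together with one function of $\Mbf$ dominating $\omega^\omega\cap\Nbf[c']$. Producing this matched pair is precisely the nontrivial content of Pawlikowski's (R6); a correct argument has to work in $\Mbf$ below a positive-measure condition with the $\Bor$-name of $c$ (equivalently, with a name for a canonical meager set engulfing $\bigcup\Mwf^{\Nbf[r]}$), using $\omega^\omega$-bounding to fix the interval-partition part and a measure-averaging construction to build the Cohen witness in $\Mbf$ simultaneously with its dominating function; it cannot be obtained by gluing the two transfers \autoref{CNradom} and \autoref{radomcov} as you do.
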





We conclude this section by proving the relationship betweem $\Dbf$ and $\Sbf$ after adding a single random real.

\begin{lemma}\label{randomds}
$\Dbf_2\preceq_{\Nbf,\Nbf^\Bor}^{\Mbf,\Mbf^\Bor}\Sbf$. In particular, $\rfrak_{\Nbf[r]}^{\Mbf[r]}\Rightarrow\dfrak_\Nbf^\Mbf$ and $\bfrak_\Nbf^\Mbf\Rightarrow\sfrak_{\Nbf[r]}^{\Mbf[r]}$.
\end{lemma}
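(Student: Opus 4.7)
The plan is to define $\Psi_-$ using the canonical random real name $\dot r$ and to define $\Psi_+$ using the $\omega^\omega$-bounding of $\Bor$, and then to verify \autoref{def:tukername}(1.1) by contrapositive. The cardinal implications $\rfrak_{\Nbf[r]}^{\Mbf[r]}\Rightarrow\dfrak_\Nbf^\Mbf$ and $\bfrak_\Nbf^\Mbf\Rightarrow\sfrak_{\Nbf[r]}^{\Mbf[r]}$ will then follow from \autoref{lem:tukey} together with the identifications $\dfrak(\Dbf_2)=\dfrak$, $\bfrak(\Dbf_2)=\bfrak$, $\dfrak(\Sbf)=\rfrak$ and $\bfrak(\Sbf)=\sfrak$.

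To define $\Psi_+:\Mbf^{\Bor}_{[\omega]^{\aleph_0}}\to\Ibb^\Mbf$, given $\dot B$ I would let $\dot m_n$ denote the canonical $\Bor$-name for $\min(\dot B\cap[n,\infty))$, which is well-defined because $\dot B$ is forced infinite. Since $\la\dot m_n:n<\omega\ra$ names a function in $\omega^\omega$ and $\Bor$ is $\omega^\omega$-bounding, there is $h\in\Mbf\cap\omega^\omega$ with $\Vdash\dot m_n\leq h(n)$ for every $n$, and choosing $h$ canonically (e.g.\ the pointwise-least dominating function recovered from maximal antichains of $\Bor$ deciding each $\dot m_n$) ensures $h\in\Nbf$ whenever $\dot B\in\Nbf^{\Bor}$. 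Setting $a_0:=0$, $a_{m+1}:=h(a_m)+1$ and $\Psi_+(\dot B):=J:=\la[a_m,a_{m+1}):m<\omega\ra$, I obtain $\Vdash\dot B\cap J_m\neq\emptyset$ for every $m$ together with $\Psi_+[\Nbf^{\Bor}_{[\omega]^{\aleph_0}}]\subseteq\Ibb^\Nbf$. For $\Psi_-$, given $I=\la I_n\ra_{n<\omega}\in\Ibb^\Mbf$, I would let $\Psi_-(I)$ be the $\Bor$-name for $\bigcup\{I_n:\dot r(n)=1\}$; this lies in $\Mbf^{\Bor}_{[\omega]^{\aleph_0}}$ because the blocks $I_n$ are nonempty and $\{n:\dot r(n)=1\}$ is forced infinite, and the recipe is absolute, hence $\Psi_-[\Ibb^\Nbf]\subseteq\Nbf^{\Bor}_{[\omega]^{\aleph_0}}$.

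It remains to verify that $\Vdash(\Psi_-(I)\propto\dot B\Rightarrow I\ntriangleright J)$, which I would prove by contrapositive. Suppose $I\not\ntriangleright J$; then the set
\[
S:=\{n<\omega:\exists m\,(J_m\subseteq I_n)\}\in\Mbf
\]
is infinite, and each $n\in S$ satisfies $\Vdash\dot B\cap I_n\supseteq\dot B\cap J_m\neq\emptyset$. Since $\dot r$ is random over $\Mbf$ and $S\in\Mbf$, the bits $\dot r\restriction S$ form a Bernoulli $\{0,1\}$-sequence, so $S^1:=\{n\in S:\dot r(n)=1\}$ and $S^0:=\{n\in S:\dot r(n)=0\}$ are both forced infinite. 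Pairwise disjointness of the $I_n$ then gives
\[
\Psi_-(I)\cap\dot B\;\supseteq\;\bigcup_{n\in S^1}(\dot B\cap I_n)\qquad\text{and}\qquad\dot B\setminus\Psi_-(I)\;\supseteq\;\bigcup_{n\in S^0}(\dot B\cap I_n),
\]
each a disjoint union of nonempty sets along an infinite index set, and therefore each forced infinite. Hence $\Psi_-(I)$ splits $\dot B$, i.e., $\Vdash\Psi_-(I)\not\propto\dot B$. The main obstacle will be the bookkeeping required to ensure absoluteness of $h$ between $\Nbf$ and $\Mbf$; once that is handled, the remainder is a clean computation exploiting the randomness of $\dot r$.
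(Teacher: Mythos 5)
Your strategy coincides with the paper's: $\Psi_-(I)$ is the union of the blocks of $I$ selected by the bits of the random real, $\Psi_+(\dot B)$ is a ground-model interval partition, produced via $\omega^\omega$-bounding, whose blocks are forced to meet $\dot B$, and condition (1.1) of \autoref{def:tukername} is checked by contrapositive using that the random bits along an infinite set coded in $\Mbf$ take both values infinitely often. (The paper builds $\Psi_+$ from the name $J^{\dot B}$ of the partition into consecutive $\dot B$-gaps and a $J'\in\Ior\cap\Mbf$ with $\Vdash J^{\dot B}\sqsubseteq^{\Ior}J'$, then doubles blocks; your direct construction from $\dot m_n=\min(\dot B\cap[n,\infty))$ is the same idea, and using $\dot r(n)$ instead of $\dot r(\min I_n)$ is immaterial.)

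The one step that fails as written is the assertion that $\omega^\omega$-bounding yields $h\in\Mbf\cap\omega^\omega$ with $\Vdash\dot m_n\leq h(n)$ for \emph{every} $n$ (and with it the ``pointwise-least dominating function'', which does not exist). This is false for $\Bor$: taking for instance $\dot B=\{n:\dot r(n)=1\}$, for each $k$ the positive-measure condition $\{x\in 2^\omega: x(i)=0 \text{ for } i<k\}$ forces $\dot m_0\geq k$, so no value of $h(0)$ can be forced by the trivial condition. What $\omega^\omega$-bounding gives (by a Borel--Cantelli argument, or everywhere domination only below conditions of measure arbitrarily close to $1$) is a single $h$ with $\Vdash \dot m\leq^* h$. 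Hence your claim ``$\Vdash\dot B\cap J_m\neq\emptyset$ for every $m$'' must be weakened to ``for all but finitely many $m$'', with the finite exceptional set depending on the generic. This repair is harmless for the rest of your argument: in the extension, all but finitely many $n\in S$ still satisfy $\dot B\cap I_n\neq\emptyset$, and since $S^0$ and $S^1$ are infinite, discarding finitely many indices leaves both displayed unions infinite, so $\Psi_-(I)$ still splits $\dot B$; this mod-finite bookkeeping is exactly what the paper absorbs into the relation $\sqsubseteq^{\Ior}$. For the $\Nbf$-coherence requirement (1.2), replace your canonical choice by a canonical $\leq^*$-bound, e.g.\ $h(n)$ the least $k$ such that the union of the conditions forcing $\dot m_n>k$ has Lebesgue measure below $2^{-n}$; since maximal antichains of $\Bor^\Nbf$ remain predense in $\Bor^\Mbf$ and measures of Borel sets are absolute, this $h$ lies in $\Nbf$ whenever $\dot B\in\Nbf^{\Bor}$. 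With these corrections your proof is complete and matches the paper's.
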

\begin{proof}
It suffices to find functions $\Psi_-:\Ior\cap\Mbf\to\Mbf^\Bor_{[\omega]^{\aleph_0}}$ and  $\Psi_+:\Mbf^\Bor_{[\omega]^{\aleph_0}}\to\Ior\cap\Mbf$ such that, for any $I\in\Ior\cap\Mbf$ and $B\in\Mbf^\Bor_{[\omega]^{\aleph_0}}$,  $\Vdash``\textrm{if\ }\Psi_-(I)\propto B$ then $I\ntriangleright\Psi_+(B)"$.

Given $I\in\Ior\cap\Mbf$ define $f_I\in\omega^\omega$ by $f_I(n):=\min I_n$ for $n<\omega$, so put 
\[\Psi_-(I):=\bigcup\Big\{[f(n),f(n+1)):\dot r(\min I_n)=1\Big\}.\]

Let $\dot B\in[\omega]^{\aleph_0}$ be a $\Bor$-name. Let $h_{\dot B}$ be the name of the increasing enumeration of $\dot B$. 

Let $J^{\dot B}\in\Ior$ be a $\Bor$-name such that $\Vdash J_n^{\dot B}:=[h_{\dot B}(n),h_{\dot B}(n))$ for $n<\omega$. Choose $J'\in\Ior\cap\Mbf$ such that $\Vdash J^{\dot B}\sqsubseteq^{\Ior} J'$ (such $J'$ exists because $\Bor$ is $\omega^\omega$-bounding). In the end, define \[\Psi_+(\dot B):=J^*\textrm{\ where\ }J_n^{*}:=J_{2n}^{'}\cup J_{2n+1}^{'}.\]

To finish the proof it sufficies to prove that, if $I\triangleright J^*$ then $\Vdash\Psi_-(I)\not\propto \dot B$. To see this, assume that $I\triangleright J^*$, that is, for infinitely many  $n<\omega$, there is some $m$ such that $I_n\supseteq J_m^*$. Next, set $C:=\big\{n:\exists m(I_n\supseteq J_m^*)\big\}$,
which is an infinite set in $\Mbf$. 

In $\Mbf[r]$, since $r$ is random real over $\Mbf$, both sets   $\big\{n\in C:\,r(\min I_n)=0\big\}$ and $\big\{n\in C:\, r(\min I_n)=1\big\}$ are infinite. Consequenly, $\Psi_-(I)$ splits $\dot B$. 
\qedhere{\textrm{\ (\autoref{randomds}})}
\end{proof}

\subsection*{Acknowledgments}
This paper was developed for the conference proceedings corresponding to the first virtual $``$ Set
Theory: Reals and Topology $"$ Workshop that Professor Diego Mej\'ia organized in November 2020. The
author is very thankful to Professor Diego Mej\'ia for letting him participate in such wonderful workshop.

This work was supported by the Austrian Science Fund (FWF) P30666 and the DOC Fellowship of the Austrian Academy of Sciences at the Institute of Discrete Mathematics and Geometry, TU Wien. 

The author is very thankful to his advisor Dr. Diego Mej\'ia for motivating him to work on this topic and for various discussions concerning the results.







{\small
\bibliography{left}
\bibliographystyle{alpha}
}

\end{document}